\documentclass[12pt,oneside,english,jou]{amsart}
\usepackage[T1]{fontenc}
\usepackage[latin9]{inputenc}
\usepackage{verbatim}
\usepackage{amsthm}
\usepackage{amstext}
\usepackage{amssymb}
\usepackage[all]{xy}


\oddsidemargin 0.1875 in \evensidemargin 0.1875in
\textwidth 6 in 
\textheight 230mm \voffset=-4mm


\makeatletter
\numberwithin{equation}{section}
\numberwithin{figure}{section}
  \theoremstyle{plain}
  \newtheorem*{thm*}{Theorem}
\theoremstyle{plain}
\newtheorem{thm}{Theorem}[section]
  \theoremstyle{definition}
  \newtheorem{defn}[thm]{Definition}
  \theoremstyle{plain}
  \newtheorem{prop}[thm]{Proposition}
  \theoremstyle{plain}
  \newtheorem{lem}[thm]{Lemma}

\newcommand{\ch}{\mathbf{1}}\newcommand{\al}{\alpha}\newcommand{\T}{\mathcal{T}}\newcommand{\Pcal}{\mathcal{P}}\newcommand{\Ucal}{\mathcal{U}}\newcommand{\D}{\mathcal{D}}\newcommand{\dotcup}{\ensuremath{\mathaccent\cdot\cup}}\linespread{1.6}

\makeatother

\usepackage{babel}

\makeatother

\usepackage{babel}

\makeatother

\usepackage{babel}

\makeatother

\usepackage{babel}

\begin{document}

\title{The universal minimal space for groups of homeomorphisms of h-homogeneous
spaces }

\author{Eli Glasner and Yonatan Gutman}

\address{Eli Glasner, School of Mathematical Sciences, Tel Aviv University,
Ramat Aviv, Tel Aviv 69978, Israel.}

\email{glasner@math.tau.ac.il}

\address{Yonatan Gutman, Laboratoire d'Analyse et de Mathématiques Appliquées,
Université de Marne-la-Vallée, 5 Boulevard Descartes, Cité Descartes
- Champs-sur-Marne, 77454 Marne-la-Vallée cedex 2, France.}

\email{yonatan.gutman@univ-mlv.fr}

\keywords{Universal minimal space, h-homogeneous, homogeneous Boolean algebra,
maximal chains, generalized Cantor sets, corona, remainder, Parovi\v{c}enko
space, collapsing algebra, dual Ramsey Theorem.}

\subjclass[2010]{37B05, 54H10, 54H20, 03G05, 06E15. }

\thanks{
The first named author's research was supported by Grant No 2006119
from the United States-Israel Binational Science Foundation (BSF)}

\dedicatory{To Anatoliy Stepin with great respect.}

\begin{abstract}
Let $X$ be a h-homogeneous zero-dimensional compact Hausdorff space,
i.e. $X$ is a Stone dual of a homogeneous Boolean algebra. It is
shown that the universal minimal space $M(G)$ of the topological
group $G={\rm Homeo}(X)$, is the space of maximal chains on $X$
introduced in \cite{Usp00}. If $X$ is metrizable then clearly $X$
is homeomorphic to the Cantor set and the result was already known
(see \cite{GW03}). However many new examples arise for non-metrizable
spaces. These include, among others, the generalized Cantor sets $X=\{0,1\}^{\kappa}$
for non-countable cardinals $\kappa$, and the \emph{corona} or \emph{remainder}
of $\omega$, $X=\beta\omega\setminus\omega$, where $\beta\omega$
denotes the Stone-$\check{C}$ech compactification of the natural numbers.
\end{abstract}

\maketitle
\pagestyle{plain}

\section{Introduction}

The existence and uniqueness of a universal minimal $G$ dynamical
system, corresponding to a topological group $G$, is due to Ellis
(see \cite{E69}, for a new short proof see \cite{GL11}). He also
showed that for a discrete infinite $G$ this space is never metrizable,
and the latter statement was generalized to the locally compact non-compact 
case by Kechris, Pestov and Todorcevic in the appendix to their paper 
\cite{KPT05}.
For Polish groups this is no longer the
case and we have such groups with $M(G)$ being trivial (groups with
the fixed point property or extremely amenable groups) and groups
with metrizable, easy to compute $M(G)$, like $M(G)=S^{1}$ for the
group $G={\rm Homeo}_{+}(S^{1})$ (\cite{P98}) and $M(G)=LO(\omega)$,
the space of linear orders on a countable set, for $S_{\infty}(\omega)$
(\cite{GW02}).

Following Pestov's work Uspenskij has shown in \cite{Usp00} that
the action of a topological group $G$ on its universal minimal system
$M(G)$ (with ${\rm card}\, M(G)\ge3$) is never $3$-transitive so
that, e.g., for manifolds $X$ of dimension $>1$ as well as for $X=Q$,
the Hilbert cube, and $X=K$, the Cantor set, $M(G)$ can not coincide
with $X$. 
Uspenskij proved his theorem by introducing the space of
maximal chains $\Phi(X)$ associated to a compact space $X$. In \cite{GW03}
the authors then showed that for $X$ the Cantor set and $G={\rm Homeo}(X)$,
in fact, $M(G)=\Phi$. It turns out that this group $G$ is a closed
subgroup of $S_{\infty}(\omega)$ and in \cite{KPT05} Kechris, Pestov
and Todorcevic unified and extended these earlier results and carried
out a systematic study of the spaces $M(G)$ for many interesting
closed subgroups of $S_{\infty}$.

In the present work we go back to \cite{GW03} and generalize it in
another direction. We consider the class of h-homogeneous spaces $X$
and show that for every space in this class the universal minimal
space $M(G)$ of the topological group $G={\rm Homeo}(X)$ is again
Uspenskij's space of maximal chains on $X$. If $X$ is metrizable
then clearly $X$ is homeomorphic to the Cantor set and the result
of \cite{GW03} is retrieved (although even in this case our proof
is new, as we make no use of a fixed point theorem). However, many
new examples arise when one considers non-metrizable spaces. These
include, among others, the generalized Cantor sets $X=\{0,1\}^{\kappa}$
for non-countable cardinals $\kappa$, and the widely studied \textbf{corona}
or \textbf{remainder} \textbf{of $\omega$}, $X=\beta\omega\setminus\omega$,
where $\beta\omega$ denotes the Stone-$\check{C}$ech compactification
of the natural numbers. As in \cite{GW03} the main combinatorial
tool we apply is the dual Ramsey theorem.

\subsection{H-homogeneous spaces and homogeneous Boolean algebras}

The following definitions 
are well known (see e.g. \cite{HNV04} Section
H-4):
\begin{enumerate}
\item A zero-dimensional compact Hausdorff topological space $X$ is called
\textbf{h-homogeneous} if every non-empty clopen subset of $X$ is
homeomorphic to the entire space X. 

\item A Boolean algebra $B$ is called \textbf{homogeneous} if for any nonzero
element $a$ of $B$ the relative algebra $B|a=\{x\in B:\, x\leq a)$
is isomorphic to $B$.
\end{enumerate}
Using Stone's Duality Theorem (see \cite{BS81} IV$\S4$) a zero-dimensional
compact Hausdorff h-homogeneous space $X$ is the Stone dual of a
homogeneous Boolean Algebra, i.e. any such space is realized 
as the space of ultrafilters $B^{\ast}$ over a homogeneous Boolean
algebra $B$ equipped with the topology given by the base $N_{a}=\{U\in B^{\ast}:\, a\in U\}$,
$a\in B$. Here are some examples of h-homogeneous spaces (see \cite{SR89}):
\begin{enumerate}
\item The countable atomless Boolean algebra is homogeneous. It corresponds
by Stone duality to the Cantor space $K=\{0,1\}^{\mathbb{N}}$.
\item Every infinite free Boolean algebra is homogeneous. These Boolean
algebras correspond by Stone duality to the generalized Cantor spaces,
$\{0,1\}^{\kappa}$ , for infinite cardinals $\kappa$
\item Let $P(\omega)$ be the Boolean algebra of all subsets of $\omega$
(the first infinite cardinal) and let $fin\subset P(\omega)$ be the
ideal comprising the finite subsets of $\omega$. Define the equivalence
relations $A\sim_{fin}B$, $A,B\in P(\omega)$, if and only if $A\triangle B$
is in $fin$. The quotient Boolean algebra $P(\omega)/fin$ is homogeneous.
This Boolean algebra corresponds by Stone duality to the \textbf{corona}
$\omega^{\ast}=\beta\omega\setminus\omega$, where $\beta\omega$
denotes the Stone-$\check{C}$ech compactification of $\omega$. 

\item A topological space $X$ is called a \textbf{Parovi\v{c}enko space}
if:

\begin{enumerate}
\item $X$ is a zero-dimensional compact space without isolated points and
with weight $\bold{c}$,
\item every two disjoint open $F_{\sigma}$ subsets in $X$ have disjoint
closures, and
\item every non-empty $G_{\delta}$ subset of $X$ has non-empty interior.
\end{enumerate}
Under CH Parovi\v{c}enko proved that every Parovi\v{c}enko space is
homeomorphic to $\omega^{*}$ (\cite{P63}).

In \cite{vDvM78} van Douwen and van Mill show that under $\neg$
CH, there are two non-homeomorphic Parovi\v{c}enko spaces. Their second
example of a Parovi\v{c}enko space is the corona $X=\beta Y\setminus Y$,
where $Y$ is the $\sigma$-compact space $\omega\times\{0,1\}^{{\bf c}}$.
It is not hard to see that in $Y$ the clopen sets are of the form
$L=\bigcup_{a\in A}\{a\}\times C_{a}$ for some $A\subset\omega$,
where for all $a\in A$, $C_{a}$ is non-empty and clopen. If $|A|=\infty$
then $L\cong Y$ and if $|A|<\infty$ then $Cl_{\beta Y}(L)\subset Y$.
These facts imply in a 
straightforward manner that $X$ is h-homogeneous.
In \cite{vDvM78} it is pointed out that under MA $X$ is not
homeomorphic to $\omega^{\ast}$. Thus under $\neg$ CH+MA, this example
provides another weight ${\bf {c}}$ h-homogeneous space.

\item Let $\kappa$ be a cardinal. By a well-known theorem of Kripke (\cite{K67})
there is a homogeneous countably generated complete Boolean algebra,
the so called \textbf{collapsing algebra} $C(\kappa)$ such that if
$A$ is a Boolean algebra with a dense subset of power at most $\kappa$,
then there is a complete embedding of $A$ in $C(\kappa)$.


\item It is not hard to check that the product of any number of h-homogeneous
spaces is again h-homogeneous.
\end{enumerate}

\subsection{The universal minimal space}

A compact Hausdorff $G$-space $X$ is said to be \textbf{minimal}
if $X$ and $\emptyset$ are the only $G$-invariant closed subsets
of $X$. By Zorn's lemma each $G$-space contains a minimal $G$-subspace.
These minimal objects are in some sense the most basic ones in the
category of $G$-spaces. For various topological groups $G$ they
have been the object of intensive study. Given a topological group
$G$ one is naturally interested in describing all of them up to isomorphism.
Such a description is given (albeit in a very weak sense) by the following
construction: as was mentioned in the introduction one can show there
exists a minimal $G$-space $M(G)$ unique up to isomorphism such
that if $X$ is a minimal $G$-space then $X$ is a factor of $M(G)$,
i.e., there is a continuous $G$-equivariant mapping from $M(G)$
onto $X$. $M(G)$ is called the \textbf{universal minimal $G$-space}.
Usually this minimal universal space is huge and an explicit description
of it is hard to come by. 

\subsection{The space of maximal chains}

Let $K$ be a compact Hausdorff space. We denote by ${\rm Exp}(K)$
the space of closed subsets of $K$ equipped with the Vietoris topology.
A subset $C\subset{\rm Exp}(K)$ is a \textbf{chain} in ${\rm Exp}(K)$
if for any $E,F\in C$ either $E\subset F$ or $F\subset E$. A chain
is \textbf{maximal} if it is maximal with respect to the inclusion
relation. One verifies easily that a maximal chain in ${\rm Exp}(K)$
is a closed subset of ${\rm Exp}(K)$, and that $\Phi(K)$, the space
of all maximal chains in ${\rm Exp}(K)$, is a closed subset of ${\rm Exp}({\rm Exp}(K))$,
i.e. $\Phi(K)\subset{\rm Exp}({\rm Exp}(K))$ is a compact space.
Note that a $G$-action on $K$ naturally induces a $G$-action on
${\rm Exp}(K)$ and $\Phi(K)$. This is true in particular for $K=M(G)$.
As the $G$-space $\Phi(M(G))$ contains a minimal subsystem it follows
that there exists an injective continuous $G$-equivariant mapping
$f:M(G)\rightarrow\Phi(M(G))$. By investigating this mapping Uspenskij
in \cite{Usp00} showed that for every topological group $G$, the
action of $G$ on the universal minimal space $M(G)$ is not $3$-transitive.
As a direct consequence of this theorem only rarely the natural action
of the group $G={\rm Homeo}(K)$ on the compact space $K$ coincides
with the universal minimal $G$-action (as is the case for $X=S^{1}$).
In \cite{Gminimal} it was shown that for $G={\rm Homeo}(X)$, where
$X$ belongs to a large family of spaces that contains in particular
the Hilbert cube, the action of $G$ on the universal minimal space
$M(G)$ is not $1$-transitive.

It is easy to see that every $c\in\Phi(K)$ has a first element $F$
which is necessarily of the form $F=\{x\}$. Moreover, calling $x\triangleq r(c)$
the \textbf{root\/} of the chain $c$, it is clear that the map $\pi:\Phi(K)\to K$,
sending a chain to its root, is a homomorphism of dynamical systems.

\subsection{The main result}

In \cite{GW03} it was shown that the universal minimal space of the
group of homeomorphisms of the Cantor set, equipped with the compact-open
topology, is the space of maximal chains over the Cantor set. Our
goal is to prove the following generalization:
\begin{thm*}
Let $X$ be a h-homogeneous zero-dimensional compact Hausdorff
topological space. Let $G={\rm Homeo}(X)$ equipped with the compact-open
topology, then $M(G)=\Phi(X)$, the space of maximal chains on $X$.
\end{thm*}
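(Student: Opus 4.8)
The plan is to prove the two halves of the statement separately: first that $\Phi(X)$ is a minimal $G$-flow, and then that it is \emph{universal} among minimal $G$-flows. Once minimality of $\Phi(X)$ is in hand, the universality of $M(G)$ automatically yields a continuous $G$-equivariant surjection $\pi\colon M(G)\to\Phi(X)$, and the whole problem reduces to showing that $\pi$ is injective, since an injective factor map between minimal flows is a continuous equivariant bijection of compact Hausdorff spaces and hence an isomorphism of flows. The engine driving both halves is the Stone-dual description $X=B^{\ast}$ with $B$ a homogeneous Boolean algebra: every basic Vietoris neighbourhood in $\Phi(X)\subset{\rm Exp}({\rm Exp}(X))$ is determined by finitely many clopen sets, so the relevant information about a maximal chain $c$ is its \emph{trace} on a finite clopen partition $\mathcal P=\{A_1,\dots,A_n\}$ of $X$, namely the order in which $c$ absorbs the parts $A_i$ as it climbs from its root to $X$. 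This trace is essentially a linear ordering of the parts, and $\Phi(X)$ is recovered as the inverse limit, over the directed family of clopen partitions ordered by refinement, of these spaces of coherent orderings.

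For minimality I would argue at the level of these traces. Given a chain $c$, a target chain $c'$, and a basic neighbourhood of $c'$ pinned down by a finite partition $\mathcal P$, it suffices to find $g\in G$ whose action carries the trace of $c$ on a suitable refinement onto that of $c'$. Here h-homogeneity does the work: since every nonempty clopen set is homeomorphic to the whole of $X$, the parts of any two clopen partitions of equal size are interchangeable by homeomorphisms, and $G$ realizes every permutation and every compatible refinement of a trace. Assembling such homeomorphisms moves $c$ into the prescribed neighbourhood of $c'$, giving density of every orbit and hence minimality.

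The universality (injectivity of $\pi$) is the heart of the matter and is where the dual Ramsey (Graham--Rothschild) theorem enters. Points of $M(G)$ are separated by finite-valued, almost-invariant continuous data coming from the $G$-action; I would show that all such data are, after refinement, controlled by the order-trace on a clopen partition and therefore factor through $\Phi(X)$. Concretely, a finite colouring attached to a partition $\mathcal P$ is a colouring of the orderings of the finitely many parts, and the dual Ramsey theorem furnishes a finer partition containing a \emph{combinatorial subspace} on which this colouring is constant; h-homogeneity then converts that combinatorial subspace into a genuine sub-copy of $X$ together with homeomorphisms witnessing the induced identification. Running this simultaneously over the directed family of partitions shows that the only $G$-invariant information which survives the limit is exactly the maximal-chain datum, so that $\pi(\xi_1)=\pi(\xi_2)$ forces $\xi_1=\xi_2$.

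The step I expect to be the main obstacle is precisely this last one. The dual Ramsey theorem is a finite combinatorial statement about partitions of $\{1,\dots,n\}$, whereas $B$ may be uncountable and the family of finite clopen partitions need not be countable or cofinally simple; the difficulty is to transfer the finite Ramsey input uniformly across this possibly uncountable directed system and to pass correctly to the limit, while checking at each stage that h-homogeneity really does supply homeomorphisms implementing the combinatorial subspaces produced by Graham--Rothschild. A secondary technical point is to verify carefully that the order-trace encoding is faithful, i.e.\ that maximal chains on $X$ correspond bijectively and $G$-equivariantly to coherent families of orderings on clopen partitions. When $X$ is metrizable this all collapses to the countable atomless Boolean algebra and recovers the result of \cite{GW03}, though the argument here uses no fixed-point theorem.
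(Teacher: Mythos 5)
Your outline follows the same skeleton as the paper's proof (minimality of $\Phi(X)$ via partition homogeneity, the resulting surjection $\pi\colon M(G)\to\Phi(X)$, symbolic coding of clopen data, dual Ramsey), but it has two genuine gaps located exactly at the point you call the heart of the matter. First, your assertion that ``points of $M(G)$ are separated by finite-valued, almost-invariant continuous data'' is precisely the claim that $M(G)$ is zero-dimensional, and that is a theorem, not a starting point: a connected $M(G)$ would admit no nonconstant finite-valued continuous functions at all. In the paper this is proved by showing that $G={\rm Homeo}(X)$ has a basis of clopen subgroups $H_{\alpha}$ at the identity (stabilizers of finite clopen partitions) and then invoking Pestov's result that a topological group with such a basis has zero-dimensional greatest ambit, hence zero-dimensional universal minimal flow. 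Some argument of this kind is indispensable and is absent from your proposal.

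Second, and more seriously, even granting zero-dimensionality and granting the Ramsey-theoretic statement (every minimal subsystem of $\Omega_{k}=\{1,-1\}^{\D_{k}}$ is a factor of $\Phi(X)$), your concluding inference ``the separating data factor through $\Phi(X)$, so $\pi(\xi_{1})=\pi(\xi_{2})$ forces $\xi_{1}=\xi_{2}$'' is a non sequitur. For each clopen $D\subset M(G)$ you obtain a coding map $\psi_{D}\colon M(G)\to Y_{D}\subset\Omega_{k}$, and the Ramsey argument yields \emph{some} factor map $\phi_{D}\colon\Phi(X)\to Y_{D}$; but injectivity of $\pi$ needs the triangle to commute, $\psi_{D}=\phi_{D}\circ\pi$, and the abstract existence of $\phi_{D}$ provides no such compatibility. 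The paper closes this gap with an ingredient your proposal never mentions: \emph{proximality} of $(\Phi(X),G)$, established alongside minimality. Since $Y_{D}$ is a factor of a proximal system it is proximal, so the diagonal is the unique minimal subset of $Y_{D}\times Y_{D}$; the image of the minimal flow $M(G)$ under $\psi_{D}\times(\phi_{D}\circ\pi)$ is a minimal subset of $Y_{D}\times Y_{D}$, hence lies in the diagonal, forcing $\psi_{D}=\phi_{D}\circ\pi$, and only then does separation of points by the maps $\psi_{D}$ give injectivity of $\pi$. Without proximality (or some substitute forcing uniqueness of factor maps into $Y_{D}$) the argument does not close. As a side remark, the obstacle you single out --- transferring the finite dual Ramsey theorem across a possibly uncountable directed system of partitions --- is handled routinely in the paper by compactness of $\Omega_{k}$: one takes an accumulation point of the net $\{\tilde{\omega}_{g_{\alpha}}^{c_{0}}\}_{\alpha\in\D}$ indexed by the directed set of all finite clopen partitions, and uncountability causes no difficulty there.
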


\subsection{Acknowledgements}

Part of this work was conducted during the Thematic Program on Asymptotic
Geometric Analysis at the Fields Institute. The authors would like
to thank the Fields Institute for its hospitality. The second author
would like to thank Dana Bartosova and  Stevo Todorcevic for a helpful discussion regarding h-homogeneous spaces and homogeneous Boolean algebras.
Finally, thanks are due to the referee for several helpful remarks.

\section{Preliminaries}

\subsection{Clopen covers}

Let $X$ be a zero-dimensional compact Hausdorff space. Denote by
$\D$ $(\tilde{\D})$ the directed set (semilattice) consisting of
all finite ordered (unordered) clopen partitions of $X$ which are
necessarily of the form $\alpha=(A_{1},A_{2}\ldots,A_{m})$ $(\tilde{\alpha}=\{A_{1},A_{2}\ldots,A_{m}\})$,
where $\dotcup_{i=1}^{n}A_{i}=X$ (disjoint union). The relation is
given by refinement: $\alpha\preceq\beta$ $(\tilde{\alpha}\preceq\tilde{\beta})$
iff for any $B\in\beta$ $(B\in\tilde{\beta})$, there is $A\in\alpha$
$(A\in\tilde{\alpha})$ so that $B\subset A$. The join (least upper
bound) of $\alpha$ and $\beta$, $\alpha\vee\beta=\{A\cap B:\: A\in\alpha,\, B\in\beta\}$,
where the ordering of indices is given by the lexicographical order
on the indices of $\alpha$ and $\beta$ $(\tilde{\alpha}\vee\tilde{\beta}=\{A\cap B:\: A\in\tilde{\alpha},\, B\in\tilde{\beta}\})$.
It is convenient to introduce the notations $\D_{k}=\{\alpha\in\D : \:|\alpha|=k\}$
and $\tilde{\D}_{k}=\{\alpha\in\D : \:|\tilde{\alpha}|=k\}$. We denote
the natural map $(A_{1},A_{2}\ldots,A_{m})\mapsto\{A_{1},A_{2}\ldots,A_{m}\}$
by $\tilde{t}:\D\rightarrow\tilde{\D}$. There is a natural $G$-action
on $\D$ $(\tilde{\D})$ given by $g(A_{1},A_{2}\ldots,A_{m})=(g(A_{1}),g(A_{2})\ldots,g(A_{m}))$
($g\{A_{1},A_{2}\ldots,A_{m}\}=\{g(A_{1}),g(A_{2})\ldots,g(A_{m})\}$).
Let $S_{k}$ denote the group of permutations of $\{1,\ldots,k\}$.
$S_{k}$ acts naturally on $\D_{k}$ by $\sigma(B_{1},B_{2}\ldots,B_{k})=(B_{\sigma(1)},B_{\sigma(2)}\ldots,B_{\sigma(k)})$
for any $\beta=(B_{1},B_{2}\ldots,B_{k})\in\D_{k}$ and $\sigma\in S_{k}$.
This action commutes with the action of $G$, i.e. $\sigma g\beta=g\sigma\beta$
for any $\sigma\in S_{k}$ and $g\in G$. 
Notice that one can identify
$\tilde{\D}_{k}=\D_{k}/S_{k}$.

\subsection{Partition Homogeneity}

Let us introduce a new definition:
\begin{defn}
A zero-dimensional compact Hausdorff space $X$ is called \textbf{partition-homogeneous}
if for every two finite ordered clopen partitions of the same cardinality,
$\alpha,\beta\in\D_{m}$, $\alpha=(A_{1},A_{2}\ldots,A_{m})$, $\beta=(B_{1},B_{2}\ldots,B_{m})$
there is $h\in{\rm Homeo}(X)$ such that $hA_{i}=B_{i}$, $i=1,\ldots,k$. \end{defn}
\begin{prop}
\label{pro:h_hom_eq_part_hom}Let $X$ be an infinite zero-dimensional
compact Hausdorff space. $X$ is h-homogeneous iff $X$ is partition-homogeneous. \end{prop}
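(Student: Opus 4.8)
The plan is to prove the two implications separately. The forward direction (h-homogeneous $\Rightarrow$ partition-homogeneous) is a routine gluing argument, while the converse carries the real content. For h-homogeneous $\Rightarrow$ partition-homogeneous, take $\alpha=(A_{1},\ldots,A_{m})$ and $\beta=(B_{1},\ldots,B_{m})$ in $\D_{m}$. Each part is a nonempty clopen set, hence homeomorphic to $X$ by h-homogeneity, so for each $i$ I choose a homeomorphism $h_{i}\colon A_{i}\to B_{i}$. Since $\{A_{i}\}$ and $\{B_{i}\}$ are clopen partitions, the map $h$ defined piecewise by $h|_{A_{i}}=h_{i}$ is a bijection that is continuous with continuous inverse (it is glued from finitely many maps on clopen pieces), so $h\in{\rm Homeo}(X)$ with $hA_{i}=B_{i}$. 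This direction uses nothing beyond the definitions.

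For the converse, assume $X$ is infinite and partition-homogeneous. First I would show that $X$ is perfect: if some $\{x\}$ were clopen, then applying partition-homogeneity to the two $2$-partitions $(\{x\},X\setminus\{x\})$ and $(X\setminus\{x\},\{x\})$ yields an $h\in{\rm Homeo}(X)$ with $h(\{x\})=X\setminus\{x\}$; since $h$ is a bijection this forces $X\setminus\{x\}$ to be a singleton, so $|X|=2$, contradicting infiniteness. Hence $X$ has no isolated points, and consequently every nonempty clopen subset can be split into two nonempty clopen pieces. Next, from partition-homogeneity applied to $\D_{2}$ I obtain that any two \emph{proper} nonempty clopen subsets $U,V$ (those with $\emptyset\neq U,V\subsetneq X$) are homeomorphic: apply it to the $2$-partitions $(U,X\setminus U)$ and $(V,X\setminus V)$ and restrict the resulting homeomorphism to $U$.

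The heart of the matter is to upgrade the statement ``all proper nonempty clopen sets are mutually homeomorphic'' to ``each is homeomorphic to the whole space $X$''. Given a proper nonempty clopen $A$, set $B=X\setminus A$ and use perfectness to split $A=A_{1}\sqcup A_{2}$ into two nonempty clopen pieces. Now $A_{1}$ and $B$ are both proper nonempty clopen, so by the previous step there are homeomorphisms $A\to A_{1}$ and $B\to A_{2}$; gluing these over the clopen partition $\{A,B\}$ of $X$ produces a homeomorphism $X\to A_{1}\sqcup A_{2}=A$. Thus $A\cong X$, which together with the trivial case $A=X$ establishes h-homogeneity.

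I expect this last gluing step to be the main obstacle. The natural consequences of partition-homogeneity only relate proper clopen sets to one another, and one must exploit perfectness to fold a copy of the complement $B$ back inside $A$ in order to reach the improper clopen set $X$ itself. This is also exactly where the hypothesis that $X$ is infinite is indispensable: the two-point discrete space is partition-homogeneous (the only nontrivial $2$-partition is $(\{1\},\{2\})$, interchanged by a homeomorphism) yet not h-homogeneous, since a singleton is not homeomorphic to the whole space; perfectness is what rules this degeneracy out.
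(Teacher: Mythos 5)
Your proof is correct and takes essentially the same approach as the paper's: the forward direction by gluing homeomorphisms over clopen pieces, and the converse by ruling out clopen singletons via infiniteness and then splitting a proper clopen set $A$ into two nonempty clopen pieces $A_{1}\sqcup A_{2}$ and absorbing the complement through partition-homogeneity applied to $2$-partitions. The only cosmetic difference is that the paper applies partition-homogeneity once to build the homeomorphism $A\to X$ directly (sending $A_{1}$ onto $A_{1}\cup A^{c}$ and fixing $A_{2}$), while you apply it twice, via your intermediate observation that all proper nonempty clopen sets are mutually homeomorphic, to build the inverse map $X\to A$.
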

\begin{proof}
Assume $X$ is h-homogeneous. Let $\alpha,\beta\in\D_{m}$, $\alpha=(A_{1},A_{2}\ldots,A_{m})$,
$\beta=(B_{1},B_{2}\ldots,B_{m})$. Select homeomorphisms $h_{A,i},h_{B,i},$
$i=1,\ldots,m$ with $h_{A,i}:A_{i}\rightarrow X$, $h_{B,i}:B_{i}\rightarrow X$.
Define $g\in{\rm Homeo}(X)$ by $g(x)=h_{B,i}^{-1}\circ h_{A,i}(x)$
for $x\in A_{i}$. Trivially $gA_{i}=B_{i}$. Assume now $X$ is partition-homogeneous.
Let $A\neq X$ be a clopen set in $X$. We distinguish between two
cases:
\begin{enumerate}
\item $A$ is a singleton. As $X$ is partition-homogeneous there exists
$h\in{\rm Aut}(X)$ with $hA=A^{c}$ and $hA^{c}=A.$ We conclude
$X$ is a two point space contradicting the assumption that $X$ is
infinite.
\item 
 $A$ is not a singleton. Because $X$ is a compact Hausdorff zero-dimensional
space we can find disjoint clopen sets $A_{1},A_{2}$ such that $A=A_{1}\cup A_{2}$.
Let $h_{1}\in G$ so that $h_{1}A_{1}=A_{1}\cup A^{c}$ and $h_{1}A_{1}^{c}=A_{2}$.
Define the homeomorphism $h:A\rightarrow X$. \[
h(x)=\begin{cases}
h_{1}(x) & x\in A_{1}\\
x & x\in A_{2}\end{cases}\]

\end{enumerate}
\end{proof}

\section{Basic properties of h-homogeneous spaces}

\subsection{Induced orders}

Let $X$ be a compact Hausdorff zero-dimensional h-homogeneous
space and denote $G={\rm Homeo}(X)$. As $X$ is either trivial or infinite, we will assume from now onward, w.l.og. that $X$ is infinite. Let $\upsilon\in\Phi(X)$ and
$D\subset X$ a closed set. Define \[
D_{\upsilon}=\bigcap_{A\in\upsilon:A\cap D\neq\emptyset}A\]
 By maximality of $\upsilon$, one has $D_{\upsilon}\in\upsilon$.
By a standard compactness argument $D_{\upsilon}\cap D\neq\emptyset$
and trivially it is the minimal element of $\upsilon$ that intersects
$D$. Similarly for $D\subset X$ a closed set with $r(\upsilon)\in D$,
define: \[
D^{\upsilon}=\overline{\bigcup_{A\in\upsilon:A\subset D}A}\]
 The maximal element of $\upsilon$ that is contained in $D$.
\begin{defn}
\label{def:Induced order}Let $\upsilon\in\Phi(X)$ and $\tilde{\alpha}=\{A_{1},A_{2}\ldots,A_{m}\}\in\tilde{\D}.$
Define $<_{\upsilon|\tilde{\alpha}}=<_{\upsilon}$, the \textbf{induced
order} on $\tilde{\alpha}$ by $\upsilon$: 
\[
A_{i}<_{\upsilon}A_{j}\Leftrightarrow(A_{i})_{\upsilon}\subseteq(A_{j})_{\upsilon}\]
 Similarly for $\upsilon\in\Phi(X)$ and $\alpha\in\D$, define the
induced order $<_{\upsilon|\alpha}=<_{\upsilon|\tilde{t}(\alpha)}$.
Denote by $t_{\upsilon}^{\ast}:\tilde{\D}\rightarrow\D$ the map $\{A_{1},A_{2}\ldots,A_{m}\}\mapsto(A_{1},A_{2}\ldots,A_{m})$
where $i<j$ if and only if $A_{i}<_{\upsilon|\alpha}A_{j}$. For
$\beta\in\D$, define $t_{\upsilon}^{\ast}(\beta)=t_{\upsilon}^{\ast}(\tilde{t}(\beta))$.
Notice that for all $\sigma\in S_{k}$, $\upsilon\in\Phi(X)$ and
$\beta\in\D$, \[
t_{\upsilon}^{\ast}(\sigma t_{\upsilon}^{\ast}(\beta))=t_{\upsilon}^{\ast}(\beta)\]
 \end{defn}
\begin{lem}
\textup{\label{lem:gt=00003D00003D00003D00003D00003D00003D00003D00003D00003D00003D00003Dtg}$gt_{\upsilon}^{\ast}(\tilde{\beta})=t_{g\upsilon}^{\ast}(g\tilde{\beta})$.} \end{lem}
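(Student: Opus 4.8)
The plan is to reduce everything to a single equivariance identity for the operation $D\mapsto D_{\upsilon}$, namely that for every $g\in G$, every $\upsilon\in\Phi(X)$, and every closed set $D\subset X$ one has $g(D_{\upsilon})=(gD)_{g\upsilon}$. First I would verify that $g\upsilon\in\Phi(X)$: since $g$ induces an inclusion-preserving homeomorphism of ${\rm Exp}(X)$, it carries the maximal chain $\upsilon$ to a maximal chain, so the right-hand side is well defined. Then, writing out the definition $(gD)_{g\upsilon}=\bigcap_{A'\in g\upsilon:\,A'\cap gD\neq\emptyset}A'$ and substituting $A'=gA$ with $A\in\upsilon$, the selection condition $gA\cap gD\neq\emptyset$ is equivalent to $A\cap D\neq\emptyset$ because $g$ is a bijection; since $g$ also commutes with arbitrary intersections, the whole expression equals $g\bigl(\bigcap_{A\in\upsilon:\,A\cap D\neq\emptyset}A\bigr)=g(D_{\upsilon})$.

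Next I would promote this identity to a statement about the induced orders. Fix $\tilde{\beta}=\{B_{1},\ldots,B_{m}\}$. Applying the key identity to each block gives $(gB_{i})_{g\upsilon}=g((B_{i})_{\upsilon})$, and since $g$ is a bijection it both preserves and reflects inclusions; hence $(B_{i})_{\upsilon}\subseteq(B_{j})_{\upsilon}$ if and only if $(gB_{i})_{g\upsilon}\subseteq(gB_{j})_{g\upsilon}$. By Definition \ref{def:Induced order} this says exactly that $B_{i}<_{\upsilon}B_{j}\Leftrightarrow gB_{i}<_{g\upsilon}gB_{j}$, i.e. $g$ is an order-isomorphism from $(\tilde{\beta},<_{\upsilon})$ onto $(g\tilde{\beta},<_{g\upsilon})$.

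Finally I would read off the conclusion. If $t_{\upsilon}^{\ast}(\tilde{\beta})=(B_{i_{1}},\ldots,B_{i_{m}})$ is the enumeration of the blocks in increasing $<_{\upsilon}$-order, then applying $g$ blockwise yields $gt_{\upsilon}^{\ast}(\tilde{\beta})=(gB_{i_{1}},\ldots,gB_{i_{m}})$; by the order-isomorphism just established this is precisely the enumeration of $g\tilde{\beta}=\{gB_{1},\ldots,gB_{m}\}$ in increasing $<_{g\upsilon}$-order, which is the definition of $t_{g\upsilon}^{\ast}(g\tilde{\beta})$. The two tuples therefore coincide, proving the lemma. The only step requiring real care is the commutation $g(D_{\upsilon})=(gD)_{g\upsilon}$: one must check that the index set over which the intersection is taken transforms correctly under $g$, and this is exactly where the bijectivity of $g$ (turning $gA\cap gD\neq\emptyset$ into $A\cap D\neq\emptyset$) does the work.
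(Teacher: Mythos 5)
Your proposal is correct and follows essentially the same route as the paper's own proof: the central step in both is the equivariance identity $(gD)_{g\upsilon}=g(D_{\upsilon})$, established by substituting $A'=gA$ in the defining intersection and using that $g$ is a bijection commuting with intersections, after which the order-isomorphism and the conclusion about the ordered tuples follow immediately. Your additional checks (that $g\upsilon$ is again a maximal chain, and that $g$ both preserves and reflects inclusions) are details the paper leaves implicit, but they do not change the argument.
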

\begin{proof}
Let $\alpha=(A_{1},A_{2}\ldots,A_{m})=t_{\upsilon}^{\ast}(\tilde{\beta})$.
By definition $i<j$ if and only if $(A_{i})_{\upsilon}\subseteq(A_{j})_{\upsilon}$.
Notice $(gA_{i})_{g\upsilon}=\bigcap_{gA\in g\upsilon:gA\cap gA_{i}\neq\emptyset}gA=g\bigcap_{A\in\upsilon:A\cap A_{i}\neq\emptyset}A=g(A_{i})_{\upsilon}$.
Therefore $i<j$ if and only if $(gA_{i})_{g\upsilon}\subseteq(gA_{j})_{g\upsilon}$,
and we conclude $g\alpha=t_{g\upsilon}^{\ast}(g\tilde{\beta})$. \end{proof}
\begin{prop}
Let $\upsilon\in\Phi(X)$ and $\tilde{\alpha}=\{A_{1},A_{2}\ldots,A_{m}\}\in\tilde{\D}.$
$<_{\upsilon|\tilde{\alpha}}$ is a linear order on 
$\tilde{\alpha}$. The ordering $A_{i_{1}}<_{\upsilon|\tilde{\alpha}}A_{i_{2}}<_{\upsilon|\tilde{\alpha}}\ldots<_{\upsilon|\tilde{\alpha}}A_{i_{m}}$
is characterized by $(A_{i_{k}})_{\upsilon}\setminus(A_{i_{1}}\cup\ldots\cup A_{i_{k-1}})^{\upsilon}=\{x_{k}\}$
for~$k=1,2,\ldots m$ and 
suitable $x_{k}\in A_{k}$. 
\end{prop}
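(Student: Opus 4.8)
The plan is to treat the two assertions separately, relying throughout on the defining property of a maximal chain: a closed set lies in $\upsilon$ as soon as it is comparable (under $\subseteq$) to every member of $\upsilon$; recall also from the preceding discussion that the minimal element of $\upsilon$ is the singleton $\{r(\upsilon)\}$, while the maximal element is $X$ itself.

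For linearity, totality is immediate: since $\upsilon$ is a chain the sets $(A_i)_{\upsilon}$ are pairwise comparable, so $<_{\upsilon}$ is a total preorder. The content is antisymmetry, i.e. that $i\mapsto(A_i)_{\upsilon}$ is injective. First I would argue by contradiction: suppose $(A_i)_{\upsilon}=(A_j)_{\upsilon}=:E$ with $i\neq j$. Then $E$ is the minimal element of $\upsilon$ meeting $A_i$ and also the minimal one meeting $A_j$, so every element of $\upsilon$ strictly below $E$ is disjoint from both $A_i$ and $A_j$. Consider the closed set $E\setminus A_i=E\cap A_i^{c}$ (clopen in $E$ since $A_i$ is clopen in $X$). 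I would check it is comparable to every member of $\upsilon$: it contains every element strictly below $E$ (those being disjoint from $A_i$ and contained in $E$) and is contained in every element $\supseteq E$. By maximality $E\setminus A_i\in\upsilon$. But $E\cap A_j\subseteq E\setminus A_i$ is nonempty, so $E\setminus A_i$ is an element of $\upsilon$ meeting $A_j$ and strictly smaller than $E$, contradicting $E=(A_j)_{\upsilon}$. This gives the linear order.

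For the characterization, write $U_{k-1}=A_{i_1}\cup\cdots\cup A_{i_{k-1}}$, $B_k=(A_{i_k})_{\upsilon}$ and $P=(U_{k-1})^{\upsilon}$ (for $k=1$ read $U_0=\emptyset$, $P=\emptyset$, and note $B_1=\{r(\upsilon)\}$ directly). Using linearity one sees that $B_k$ meets precisely $A_{i_1},\ldots,A_{i_k}$, whence $B_{k-1}\subseteq B_k\subseteq U_k$ and $B_{k-1}\subseteq U_{k-1}$; in particular $r(\upsilon)\in A_{i_1}\subseteq U_{k-1}$, so $P$ is well defined and $B_{k-1}\subseteq P$. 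I would then establish two facts. First, $P\subsetneq B_k$ with nothing of $\upsilon$ strictly between them: any $Q\in\upsilon$ with $P\subsetneq Q\subsetneq B_k$ cannot be contained in $U_{k-1}$ (else $Q\subseteq P$), so $Q$ meets some $A_{i_l}$ with $l\geq k$, forcing $B_k\subseteq(A_{i_l})_{\upsilon}\subseteq Q\subsetneq B_k$, a contradiction. Second, $B_k\cap U_{k-1}=P$: the closed set $W=B_k\cap U_{k-1}$ satisfies $P\subseteq W\subseteq B_k$ and $W\subseteq U_{k-1}$, is comparable to every member of $\upsilon$ by the jump just proved, hence lies in $\upsilon$ and is contained in $U_{k-1}$, so $W\subseteq P$; the reverse inclusion is clear.

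Granting these, the single-point conclusion follows from zero-dimensionality together with maximality: because $(P,B_k)$ is a jump, every closed $R$ with $P\subseteq R\subseteq B_k$ is automatically comparable to all of $\upsilon$, hence must equal $P$ or $B_k$. If $B_k\setminus P$ contained two distinct points $y,z$, a clopen $V$ with $y\in V\not\ni z$ would make $R=P\cup(B_k\cap V)$ a closed set strictly between $P$ and $B_k$, a contradiction; hence $B_k\setminus P=\{x_k\}$. Finally $B_k\setminus P=B_k\setminus U_{k-1}=B_k\cap A_{i_k}$ by $B_k\cap U_{k-1}=P$ and $B_k\subseteq U_k$, so $x_k\in A_{i_k}$, which is the asserted formula. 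I expect the main obstacle to be the jump analysis and the second fact: one must invoke maximality of $\upsilon$ in two distinct ways — to force comparable sets into $\upsilon$, and to identify $P$ — and combine it with zero-dimensionality precisely where the cardinality-one conclusion is extracted; the bookkeeping distinguishing $\subseteq$ from $\subsetneq$ and the degenerate case $k=1$ also require care.
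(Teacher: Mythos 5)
Your proof is correct, but it follows a genuinely different route from the paper's. The paper's proof is organized around the notion of a \emph{restricted chain}: for a clopen set $D$ containing the root it asserts that $\upsilon_{|D^{c}}=\{A\setminus D:\ D^{\upsilon}\subsetneq A\in\upsilon\}$ is a maximal chain on the clopen subspace $D^{c}$, hence has a singleton root; the enumeration $i_{1},i_{2},\ldots,i_{m}$ is then produced inductively by letting $A_{i_{k+1}}$ be the partition element containing the root of $\upsilon_{|(A_{i_{1}}\cup\cdots\cup A_{i_{k}})^{c}}$, and both linearity and the singleton identity are read off from this construction (the points $x_{k}$ are precisely those roots). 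You never introduce restricted chains; instead you work inside $\upsilon$ itself: antisymmetry comes from showing that if $(A_{i})_{\upsilon}=(A_{j})_{\upsilon}=E$ then the nonempty closed set $E\setminus A_{i}$ is comparable with every member of $\upsilon$, hence lies in $\upsilon$ by maximality, contradicting minimality of $(A_{j})_{\upsilon}$; and the singleton identity comes from showing that $P=(A_{i_{1}}\cup\cdots\cup A_{i_{k-1}})^{\upsilon}$ and $B_{k}=(A_{i_{k}})_{\upsilon}$ form a jump of $\upsilon$ with $B_{k}\cap U_{k-1}=P$, so that any closed set strictly between them would contradict maximality. In effect, your jump analysis proves directly what the paper's ``it is easy to see'' delegates to the restricted-chain formalism: the assertion that $\upsilon_{|U_{k-1}^{c}}$ has a singleton root is exactly your assertion that $B_{k}\setminus P$ is a singleton. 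What the paper's approach buys is brevity and a reusable device ($\upsilon_{|D^{c}}$ and its root reappear in the proof that $(\Phi(X),G)$ is proximal); what yours buys is a self-contained verification in which each appeal to maximality of $\upsilon$ is explicit. One small simplification: zero-dimensionality is not actually needed in your last step, since in a Hausdorff space $R=P\cup\{y\}$ is already a closed set strictly between $P$ and $B_{k}$ whenever $y,z$ are distinct points of $B_{k}\setminus P$.
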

\begin{proof}
Let $D\subset X$ be clopen so that $r(\upsilon)\in D$, then it is
easy to see that $\upsilon_{|D^{c}}\triangleq\{A\setminus D|\, D^{\upsilon}\subsetneq A\in\upsilon\}$
is a maximal chain in $D^{c}$ and in particular has a root $r(\upsilon_{|D^{c}})=x_{0}\in D^{c}$.
Let $i_{1}$ be such that $r(\upsilon)\in A_{i_{1}}$. Inductively
let $i_{k+1}$ be such that $r(\upsilon_{|(A_{i_{1}}\cup A_{i_{2}}\cup\ldots\cup A_{i_{k}})^{c}})\in A_{i_{k+1}}$.
It is easy to see $A_{i_{1}}<_{\upsilon}A_{i_{2}}<_{\upsilon}\ldots<_{\upsilon}A_{i_{m}}$.
This implies both that $<_{\upsilon}$ is a linear order and $(A_{i_{k}})_{\upsilon}\setminus(A_{i_{1}}\cup\ldots\cup A_{i_{k-1}})^{\upsilon}=\{x_{k}\}$
for some $x_{k}\in A_{k}$, $k=1,2,\ldots m$.
\end{proof}

\subsection{Minimality and proximality of natural actions}

The basis for the Vietoris topology for the compact Hausdorff space
${\rm Exp}(X)$ is given by open sets of the form: \[
\Ucal=\langle{A}_{1},\dots,{A}_{k}\rangle=\{F\in{\rm Exp}(X):\ \,\forall i\, F\cap{A}_{i}\neq\emptyset\textrm{\textrm{ and }}F\subset\bigcup{A_{i}}\}\]
 where ${A}_{i}\subset X$ is clopen. It is easy to see that a basis
of clopen neighborhood of a maximal chain $\upsilon\in\Phi(X)$ is
given by \[
\mathfrak{U_{\alpha}}=\langle\Ucal_{1},\dots,\Ucal_{n}\rangle\]
 where ${\alpha}=(A_{1},A_{2}\ldots,A_{n})\in\D$ and \[
\Ucal_{j}=\langle{A}_{1},\dots,{A}_{j}\rangle,\qquad j=1,2,\dots,n,\]

The following lemma is 
straightforward:
\begin{lem}
\label{lem:order_determines_neighborhood} Let $\alpha=(A_{1},A_{2}\ldots,A_{n})\in\D$
and $\upsilon\in\Phi(X)$. Let $<_{\upsilon|\alpha}$ be the induced
order of $\upsilon$ on $\alpha$, then $\upsilon\in\mathfrak{U}{}_{\alpha}$
if and only if 
$<_{\upsilon}=<$, where $<$ is the usual order on $\{1,2\ldots,n\}$.
In particular $\upsilon\in\mathfrak{U}_{t_{\upsilon}^{\ast}(\alpha)}$.
\end{lem}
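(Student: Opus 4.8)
The plan is to unpack the two defining conditions of membership in the basic Vietoris neighborhood $\mathfrak{U}_\alpha=\langle\Ucal_1,\dots,\Ucal_n\rangle$ and translate them into statements about the nested family $(A_i)_\upsilon$. Writing $C_j=A_1\cup\cdots\cup A_j$, a closed set $F$ lies in $\Ucal_j=\langle A_1,\dots,A_j\rangle$ exactly when $F\subseteq C_j$ and $F$ meets each of $A_1,\dots,A_j$; in particular the $\Ucal_j$ are pairwise disjoint, since $A_{j'}\cap C_j=\emptyset$ for $j'>j$. Thus $\upsilon\in\mathfrak{U}_\alpha$ amounts to (i) $\upsilon\cap\Ucal_j\neq\emptyset$ for every $j$, and (ii) $\upsilon\subseteq\bigcup_j\Ucal_j$. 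The one recurring tool is that, because $\upsilon$ is a chain and $(A_i)_\upsilon$ is by definition its minimal element meeting $A_i$, any $F\in\upsilon$ that meets $A_i$ satisfies $(A_i)_\upsilon\subseteq F$.

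For the forward implication I would assume $\upsilon\in\mathfrak{U}_\alpha$ and fix, for each $j$, some $F_j\in\upsilon\cap\Ucal_j$ supplied by (i). Since $F_j$ meets every $A_i$ with $i\le j$, the tool above gives $(A_i)_\upsilon\subseteq F_j$ for all $i\le j$; and since $F_j\subseteq C_j$ while $(A_{j+1})_\upsilon$ meets $A_{j+1}\subseteq X\setminus C_j$, the chain property forces $F_j\subsetneq(A_{j+1})_\upsilon$. Chaining these inequalities gives $(A_j)_\upsilon\subseteq F_j\subsetneq(A_{j+1})_\upsilon$ for $j=1,\dots,n-1$, hence $(A_1)_\upsilon\subsetneq\cdots\subsetneq(A_n)_\upsilon$, which is exactly the assertion that $<_\upsilon$ is the usual order on $\{1,\dots,n\}$.

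For the converse I would assume $<_\upsilon$ is the identity and verify (i) and (ii). Here the real input is the preceding Proposition, whose characterization gives $(A_j)_\upsilon=(C_{j-1})^\upsilon\cup\{x_j\}$ with $x_j\in A_j$ (and $(A_1)_\upsilon=\{r(\upsilon)\}$); since $(C_{j-1})^\upsilon\subseteq C_{j-1}$ this yields the containment $(A_j)_\upsilon\subseteq C_j$, and together with $(A_i)_\upsilon\subseteq(A_j)_\upsilon$ for $i\le j$ (so that $(A_j)_\upsilon$ meets each such $A_i$) it shows $(A_j)_\upsilon\in\upsilon\cap\Ucal_j$, giving (i). For (ii), given $F\in\upsilon$ I would let $j$ be least with $F\subseteq C_j$; then $F$ meets $A_j$, so $F\supseteq(A_j)_\upsilon\supseteq(A_i)_\upsilon$ meets each $A_i$ with $i\le j$, whence $F\in\Ucal_j$. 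The final ``in particular'' is then immediate, because by definition $t_\upsilon^\ast(\alpha)$ reorders $\alpha$ so that the induced order becomes the usual one, and the equivalence just proved applies. I expect the only delicate point to be establishing $(A_j)_\upsilon\subseteq C_j$ in the converse direction: this is precisely where the ``singleton increment'' characterization of the induced order is needed, whereas every other step is routine manipulation of the chain $\upsilon$.
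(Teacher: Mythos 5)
Your proof is correct. There is, however, nothing in the paper to compare it against: the authors state this lemma with the single remark that it is ``straightforward'' and give no proof, so your argument is a genuine filling-in of omitted details rather than a variant of an existing one. Your skeleton is sound: unpacking $\mathfrak{U}_\alpha$ into the two Vietoris conditions, noting the $\Ucal_j$ are pairwise disjoint, and exploiting over and over the fact that any $F\in\upsilon$ meeting $A_i$ contains $(A_i)_\upsilon$. One remark on economy: the step you single out as delicate --- establishing $(A_j)_\upsilon\subseteq C_j$ in the converse --- does not actually need the preceding Proposition's singleton-increment characterization. It follows from the chain structure alone: if $(A_j)_\upsilon$ met some $A_l$ with $l>j$, then minimality of $(A_l)_\upsilon$ among elements of $\upsilon$ meeting $A_l$ would give $(A_l)_\upsilon\subseteq(A_j)_\upsilon$, while the assumed ordering gives $(A_j)_\upsilon\subsetneq(A_l)_\upsilon$, a contradiction; hence $(A_j)_\upsilon\cap A_l=\emptyset$ for all $l>j$. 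With that substitution both directions become pure manipulations of the chain $\upsilon$, which is presumably why the authors felt entitled to call the lemma straightforward; your appeal to the Proposition is valid but heavier than necessary, and it carries the small burden of checking that $(C_{j-1})^\upsilon$ is defined (i.e.\ that $r(\upsilon)\in C_{j-1}$) and that the set-difference statement really yields the inclusion $(A_j)_\upsilon\subseteq(C_{j-1})^\upsilon\cup\{x_j\}$ you use.
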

\begin{thm}
\label{thm:X_is_minimal&exteremly_proximal} 

\begin{enumerate}
\item The system $(X,G)$ is minimal.
\item The system $(X,G)$ is extremely proximal; i.e. for every closed set
$\emptyset\not=F\subsetneqq X$ there exists a net $\{g_{i}\}_{i\in I}$
in $G$ such that we have $\lim_{i\in I}g_{i}F=\{x_{0}\}$ for some
point $x_{0}\in X$ (see \cite{G74}).
\item The minimal system $(X,G)$ is not isomorphic to the universal minimal
system $(M(G),G)$.
\item $(\Phi(X),G)$ is minimal.
\item \label{enu:Phi_is_proximal}$(\Phi(X),G)$ is proximal.
\end{enumerate}
\end{thm}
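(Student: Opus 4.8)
The plan is to prove the five assertions in the order listed, using partition-homogeneity (Proposition~\ref{pro:h_hom_eq_part_hom}) as the engine throughout. Since every non-empty clopen set is homeomorphic to $X$ and the group acts transitively on ordered clopen partitions of any fixed cardinality, the group is extremely rich, and this is what drives all five parts.

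For \textbf{(1)}, minimality of $(X,G)$, I would fix a point $x_0$ and a basic clopen neighborhood $A$, and show the orbit of any $x$ meets $A$. Writing $X = A \dotcup A^c$ as a two-element ordered partition $(A, A^c)$, and noting $\{x\}$ lies in some clopen cell, partition-homogeneity lets me map the cell containing $x$ onto $A$; hence some $g\in G$ sends $x$ into $A$. Thus every orbit is dense and $(X,G)$ is minimal. For \textbf{(2)}, extreme proximality, given a closed proper $F \subsetneq X$ I would pick a clopen $A$ with $F \subset A \subsetneq X$ (possible by zero-dimensionality and compactness). Using h-homogeneity one can find, for each clopen $B \subsetneq A$ a homeomorphism contracting $A$ into $B$; iterating along a neighborhood base at a chosen point $x_0$ and using that $A\cong X$, I would build a net $g_i$ with $g_i A$ shrinking to $\{x_0\}$, whence $g_iF \to \{x_0\}$ in ${\rm Exp}(X)$. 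The key point is that $A$ being clopen and homeomorphic to $X$ lets me reproduce inside $A$ the same contraction phenomenon, so this reduces to a self-similar squeezing argument.

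Part \textbf{(3)} is essentially a citation of Uspenskij's theorem as recorded in the introduction: since the $G$-action on $M(G)$ is never $3$-transitive (and, for ${\rm card}\,X\ge 3$, the $G$-action on $X$ \emph{is} highly transitive by partition-homogeneity — in fact $n$-transitive on clopen partitions for all $n$), $(X,G)$ cannot be isomorphic to $(M(G),G)$; I would simply observe that partition-homogeneity supplies the transitivity that contradicts Uspenskij's non-$3$-transitivity of $M(G)$. For \textbf{(4)}, minimality of $(\Phi(X),G)$, I would use the clopen base $\mathfrak{U}_\alpha$ from the preceding discussion together with Lemma~\ref{lem:order_determines_neighborhood}, which says $\upsilon \in \mathfrak{U}_\alpha$ iff the induced order $<_{\upsilon|\alpha}$ is the standard order. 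Given any $\upsilon \in \Phi(X)$ and any basic open $\mathfrak{U}_\alpha$, I must find $g$ with $g\upsilon \in \mathfrak{U}_\alpha$. By Lemma~\ref{lem:gt=00003D00003D00003D00003D00003D00003D00003D00003D00003D00003D00003Dtg} the induced order transforms $G$-equivariantly, so it suffices to map the partition $t^{\ast}_{\upsilon}(\alpha)$ (ordered by $\upsilon$) onto $\alpha$ in its standard order; partition-homogeneity provides exactly such a $g$, and then $g\upsilon$ lands in $\mathfrak{U}_\alpha$. Hence every orbit is dense.

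Part \textbf{(5)}, proximality of $(\Phi(X),G)$, is the main obstacle. Given two maximal chains $\upsilon,\omega$, I must produce a net $g_i$ with $g_i\upsilon$ and $g_i\omega$ converging to a common limit. The strategy is to show that by applying group elements I can drive both chains into arbitrarily fine common basic neighborhoods $\mathfrak{U}_\alpha$ simultaneously. Concretely, for a target ordered partition $\alpha$ I would first use the contraction mechanism from part~(2) to squeeze the ``disagreement'' of the two chains into a single small cell, then apply a homeomorphism that fixes the coarse structure while being highly mobile inside that cell — so that, in the limit, both images agree on the induced orders at every partition, forcing a common limit chain. The delicate part is coordinating a single net $g_i$ that works for both $\upsilon$ and $\omega$ at once and verifying convergence in the double-exponential Vietoris topology on $\Phi(X)\subset {\rm Exp}({\rm Exp}(X))$; this is where h-homogeneity (via self-similarity of clopen pieces) must be exploited carefully, and I expect the bulk of the technical work to reside here.
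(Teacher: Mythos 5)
Your parts (1), (2) and (4) are correct and essentially identical to the paper's arguments: minimality of $(X,G)$ from transitivity of $G$ on nontrivial clopen sets; extreme proximality by carrying a clopen set containing $F$ onto arbitrarily small clopen neighborhoods of a point $x_{0}\notin F$ (the paper does this in one step, choosing for each clopen neighborhood $U$ of $x_{0}$ disjoint from $F$ an element $g_{U}$ with $g_{U}(U^{c})=U$; no iteration or self-similar squeezing is needed); and minimality of $(\Phi(X),G)$ by using partition-homogeneity to carry $t_{\upsilon}^{\ast}(\alpha)$ onto $\alpha$ and invoking Lemma \ref{lem:order_determines_neighborhood}.

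Part (3) has a genuine gap. Uspenskij's theorem is about transitivity on triples of distinct \emph{points}, and your claim that partition-homogeneity supplies this is false: matching ordered clopen partitions gives no control over where individual points go. The implication fails inside the very class treated by this paper: $\omega^{\ast}=\beta\omega\setminus\omega$ is h-homogeneous, yet by Frol\'{\i}k's theorem it is not even point-homogeneous in ZFC, so for this $X$ the system $(X,G)$ is not $1$-transitive, let alone $3$-transitive. (Point $3$-transitivity does hold for the Cantor set, which is why this shortcut is available in \cite{GW03}, but not here.) The paper therefore gives a direct proof using no transitivity: assuming $(X,G)\cong(M(G),G)$, take a minimal subset $Y\subset\Phi(X)$; by coalescence of the universal minimal system the root map $\pi:Y\rightarrow X$ is an isomorphism, so the stabilizer of a point $p_{0}$ coincides with the stabilizer of the corresponding chain $c_{0}$. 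Choosing $F\in c_{0}$ with $\{p_{0}\}\subsetneqq F\subsetneqq X$, a clopen partition $(P,A,B)$ with $p_{0}\in P$, $P\cap F\neq\emptyset$, $A\cap F\neq\emptyset$, $B\cap F=\emptyset$, and $g\in G$ with $g|_{P}={\rm Id}$, $gA=B$, $gB=A$, one gets $g c_{0}=c_{0}$ while $F$ and $gF$ are incomparable, a contradiction. Some argument of this kind is required; the citation route as you set it up does not go through.

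Part (5) is also incomplete, although your stated target --- forcing $g\upsilon_{1}$ and $g\upsilon_{2}$ into a common basic neighborhood $\mathfrak{U}_{\alpha}$ for arbitrary $\alpha$, so that the induced orders agree and the limits coincide --- is exactly the paper's. What is missing is the construction itself, which you explicitly defer. The paper carries it out by a finite induction over the cells of $\alpha=(A_{1},\ldots,A_{n})$, using the restriction operation $\upsilon|_{D^{c}}$ (the restriction of a maximal chain to the complement of a clopen set containing its root is again a maximal chain, with a root): choose $g_{1}$ sending both roots $r(\upsilon_{1}),r(\upsilon_{2})$ into $A_{1}$; given $g_{k}$, choose $g_{k+1}$ agreeing with $g_{k}$ on $A_{1}\cup\cdots\cup A_{k}$ and sending the roots of the two restricted chains $(g_{k}\upsilon_{j})|_{(A_{1}\cup\cdots\cup A_{k})^{c}}$, $j=1,2$, into $A_{k+1}$; then $g=g_{n}$ places both chains in $\mathfrak{U}_{\alpha}$. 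No contraction or squeezing of the ``disagreement'' into a small cell is involved (and it is unclear how such a squeezing argument would control induced orders, since every chain contains $X$ itself and large sets cannot be shrunk); only finitely many applications of partition-homogeneity per $\alpha$ are needed.
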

\begin{proof}
$ $
\begin{enumerate}
\item Since $X$ is h-homogeneous, then by Proposition \ref{pro:h_hom_eq_part_hom},
$G$ acts transitively on non-trivial (i.e. not $\emptyset,X$) clopen
sets. Since $G$ acts transitively on the above mentioned basis, it
follows that for every $U\in\Ucal$ we have $\cup\{\al(U):\al\in G\}=X$.
This property is equivalent to the minimality of the system $(X,G)$.
\item Fix some $x_{0}$ in $X$ such that $x_{0}\not\in F$. For an arbitrary
basic clopen neighborhood $U=A$ of $x_{0}$ which is disjoint from
$F$ choose $\al_{U}\in G$ such that $\al_{U}(A^{c})=A$. Then $\al$
satisfies $\al_{U}(F)\subset U$. Clearly now $\{\al_{U}:U\ {\text{a neighborhood of}}\allowbreak\ x_{0}\}$
is the required net.
\item %
As the system $(X,G)$ is certainly $3$-transitive this claim
follows from Uspenskij's theorem \cite{Usp00}. 
For completeness we provide a direct proof.
{}Suppose $(X,G)$ is isomorphic to the universal minimal $G$ system.
Let $Y\subset\Phi$ be a minimal subset of $\Phi$. Then, by the coalescence
of the universal minimal system (every $G$-endomorphism $\phi:(M(G),G)\rightarrow(M(G),G)$
(which is necessarily onto) is an isomorphism, see \cite{GL11} and
\cite{Usp00}), the restriction $\pi:Y\to X$, sending a chain to
its root, is an isomorphism. Fix $c_{0}\in Y$ and let $p_{0}\in X$
be its root; i.e. $\pi(c_{0})=p_{0}$. Let $H=\{\al\in G:\al p_{0}=p_{0}\}$,
the stability group of $p_{0}$. Since $\pi$ is an isomorphism we
also have $H=\{\al\in G:\al c_{0}=c_{0}\}$. Choose $F\in c_{0}$
such that $\{p_{0}\}\subsetneqq F\subsetneqq X$ and let $p_{0}\ne a\in F$
(recall $X$ is infinite). Choose a clopen partition of $(P,A,B)$
of $X$ with $B\cap F=\emptyset$, $P\cap F\neq\emptyset$ and $A\cap F\neq\emptyset$.
Using \index{the}the fact that $X$ is partition homogeneous, one
can find $g\in G$ so that $gP=P$, $gA=B$ and $gB=A$. One redefines
$g$ so that $g_{|P}=Id$. As $g(A\cup P)\cap A=\emptyset$, we have
$F\setminus gF\neq\emptyset$. As $gA=B$ we have $gF\setminus F\neq\emptyset$.
Conclude that $F$ and $gF$ are not comparable. On the other hand
$g(p_{0})=p_{0}$ means $g\in H$ whence also $gc_{0}=c_{0}$. In
particular $gF\in c_{0}$ and as $c_{0}$ is a chain one of the inclusions
$F\subset gF$ or $gF\subset F$ must hold. This contradiction shows
that $(X,G)$ cannot be the universal minimal $G$-system.
\item \label{enu:Proof of minimality}

Let $\upsilon',\upsilon\in\Phi(X)$ and $\upsilon'\in\mathfrak{U}_{\alpha}$
for some $\alpha=(A_{1},A_{2}\ldots,A_{n})\in\D$. Let $<_{\upsilon}$
be the induced order of $\upsilon$ on $\alpha$. Let $\sigma\in S_{n}$
be such that for any $i<j$, $A_{\sigma(i)}<_{\upsilon}A_{\sigma(j)}$.
As $X$ is partition homogeneous we can choose $g\in G$ so that $g{A}_{\sigma(i)}={A}_{i}$.
Clearly $g\upsilon\in\mathfrak{\mathfrak{U}_{\alpha}}$.

\item Let $\upsilon_{1},\upsilon_{2}\in\Phi(X)$. Fix some $\upsilon'\in\mathfrak{U_{\alpha}}$
for some $\alpha=(A_{1},A_{2}\ldots,A_{n})\in\D$. Let $<$ be the
usual order on $\{1,2\ldots,n\}$. Inductively we will construct $g\in G$
so that $<_{g\upsilon_{1}|\alpha}=<_{g\upsilon_{2}|\alpha}=<$. Using
Lemma \ref{lem:order_determines_neighborhood}, this implies $g\upsilon_{1}\in\mathfrak{U_{\alpha}}$
and $g\upsilon_{2}\in\mathfrak{U_{\alpha}}$. As $\mathfrak{U_{\alpha}}$
is arbitrary, this establishes proximality. Indeed let $g_{1}\in G$
so that $g_{1}(r(\upsilon_{1})),g_{1}(r(\upsilon_{2}))\in A_{1}$.
Assume we have constructed $g_{k}\in G$. Define $g_{k+1}\in G$ so
that $g_{k+1|A_{1}\cup A_{2}\cup\ldots\cup A_{k}}=g_{k|A_{1}\cup A_{2}\cup\ldots\cup A_{k}}$
and $g_{k+1}(r((g_{k}\upsilon_{1})_{|(A_{i_{1}}\cup A_{i_{2}}\cup\ldots\cup A_{i_{k}})^{c}})$,
$g_{k+1}(r(g_{k}\upsilon_{2})_{|(A_{i_{1}}\cup A_{i_{2}}\cup\ldots\cup A_{i_{k}})^{c}})\in A_{i_{k+1}}$.
It is easy to see that $g=g_{n}$ has the desired properties.
\end{enumerate}
\end{proof}

\section{Calculation of the universal minimal space }

\subsection{Overview}

The goal of this section is to generalize the main theorem of \cite{GW03}:
the universal minimal space of the group of homeomorphisms of the
Cantor set, equipped with the compact-open topology, is the space
of maximal chains over the Cantor set. We prove the following theorem:
\begin{thm}
\label{thm:U_G_for_Homogeneous}Let $X$ be a h-homogeneous
zero-dimensional compact Hausdorff topological space. Let $G={\rm Homeo}(X)$
equipped with the compact-open topology, then $M(G)=\Phi(X)$.
\end{thm}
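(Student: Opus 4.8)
The plan is to identify $\Phi(X)$ as the universal minimal space by verifying the two defining properties: minimality and universality. Minimality is already in hand, since Theorem~\ref{thm:X_is_minimal&exteremly_proximal} establishes that $(\Phi(X),G)$ is minimal. The substance of the theorem is therefore \emph{universality}: given an arbitrary minimal $G$-system $(Y,G)$, I must produce a continuous $G$-equivariant surjection $\Phi(X)\to Y$. Equivalently, since $M(G)$ is the unique universal minimal system, it suffices to exhibit a $G$-map from $\Phi(X)$ onto $M(G)$; combined with the fact (from Uspenskij's construction) that there is an injective $G$-map $M(G)\to\Phi(M(G))$ and with the minimality of $\Phi(X)$, one reduces matters to showing that every minimal system is a factor of $\Phi(X)$.

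The concrete strategy, following the Cantor-set paradigm of \cite{GW03}, is to realize $M(G)$ through the combinatorial data of the partitions $\D$. First I would encode a point of a minimal system by its "type" relative to finite clopen partitions: for each $\alpha\in\D_k$ and each $g\in G$, the induced order $<_{g\upsilon|\alpha}$ (Definition~\ref{def:Induced order}) records how a maximal chain $\upsilon$ sits inside $\alpha$, and Lemma~\ref{lem:gt=00003D00003D00003D00003D00003D00003D00003D00003D00003D00003D00003Dtg} shows this data transforms equivariantly under $G$. The key point is to set up an inverse-limit description of $\Phi(X)$ over the directed system $\D$, where at each finite stage the relevant object is the finite set of orderings $\D_k$ carrying the $G$- and $S_k$-actions, with $\tilde\D_k=\D_k/S_k$ as the unordered quotient. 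A minimal subsystem of the relevant space of functions on $\D$ then projects onto $\Phi(X)$, and coalescence of the universal minimal system (already invoked in the proof of Theorem~\ref{thm:X_is_minimal&exteremly_proximal}(3)) forces this projection to be an isomorphism onto $M(G)$.

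The main obstacle, and the heart of the argument, is the combinatorial input needed to show that the $G$-action on the partition data is "as rich as possible," so that no proper minimal subsystem can appear strictly between $\Phi(X)$ and $M(G)$. This is exactly where the dual Ramsey theorem enters, as flagged in the introduction: one needs that for colorings of the ordered clopen partitions $\D_k$ there exist large monochromatic sub-structures compatible with the $G$-action, which in turn forces the orbit closure of generic chains to fill out all admissible orderings. I would establish a finite Ramsey-type statement for the $S_k$-action on $\D_k$ and then transfer it, via partition-homogeneity (Proposition~\ref{pro:h_hom_eq_part_hom}) and the neighborhood description $\mathfrak{U}_\alpha$ of Lemma~\ref{lem:order_determines_neighborhood}, into a statement about the dynamics on $\Phi(X)$.

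Finally I would assemble these pieces: proximality of $(\Phi(X),G)$ from Theorem~\ref{thm:X_is_minimal&exteremly_proximal}(\ref{enu:Phi_is_proximal}) guarantees that $\Phi(X)$ has a unique minimal image of each type, and the Ramsey argument guarantees universality, so that the canonical $G$-map $M(G)\to\Phi(X)$ (existing because $\Phi(X)$ is minimal and every minimal system is a factor of $M(G)$) admits a $G$-equivariant inverse. I expect the verification that the inverse-limit and the Ramsey-theoretic monochromatic partitions genuinely match the Vietoris topology on $\Phi(X)$ — rather than some coarser or finer quotient — to require the most care, since this is where the non-metrizability of $X$ prevents a naive appeal to sequential arguments and one must work throughout with nets and the directed set $\D$.
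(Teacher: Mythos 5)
Your proposal has the right skeleton --- use minimality of $(\Phi(X),G)$ to obtain the canonical epimorphism $\pi:(M(G),G)\to(\Phi(X),G)$, then use the dual Ramsey theorem and proximality to show $\pi$ is invertible --- but it has a genuine gap at its center: you never bring $M(G)$ itself into contact with the combinatorics of clopen partitions. All of the machinery you propose (induced orders, an inverse-limit description over $\D$, a Ramsey statement about colorings of $\D_k$) lives on the $X$-side, i.e.\ it describes $\Phi(X)$; but the difficulty is to control $M(G)$, an abstract minimal system about which nothing is known a priori. The step that makes this possible in the paper, and which is absent from your plan, is that $M(G)$ is \emph{zero-dimensional}: since $G={\rm Homeo}(X)$ has a basis of clopen subgroups at the identity (Lemma \ref{lem:X zero implies Homeo zero}), Pestov's result on the greatest ambit \cite{P98} gives Theorem \ref{thm:U_G is ZD}, so $M(G)$ has a basis of clopen sets. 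Only then can $M(G)$ be encoded symbolically: for a clopen $D\subset M(G)$ the stabilizer $H=\{g\in G: gD=D\}$ is a clopen subgroup, hence contains $H_{\alpha}$ for some $\alpha\in\D_k$, and $m\mapsto(F_D(gm))_{g\in G}$ with $F_D=2\ch_D-\ch$ defines a $G$-map $\psi_D:M(G)\to\Omega_k$ onto a minimal symbolic system $Y_D$. This is the bridge between the abstract $M(G)$ and the partition combinatorics; without it the dual Ramsey theorem has nothing to act on, however sharp a monochromaticity statement you prove. Note also that your opening reduction is circular (you reduce the claim that every minimal system is a factor of $\Phi(X)$ to that very claim), and your factor maps point the wrong way: what the Ramsey argument yields (Theorem \ref{thm:Minimal Factor}) is that every minimal subsystem of $(G,\Omega_k)$ is a \emph{factor of} $(G,\Phi(X))$, not that some minimal function-space system projects onto $\Phi(X)$.

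The concluding mechanism also needs repair. Coalescence of $M(G)$ would indeed finish the proof \emph{if} you already had an epimorphism $\Phi(X)\to M(G)$, but producing that map is exactly the missing content; the paper does not use coalescence here at all. Instead it argues: for each clopen $D\subset M(G)$ there are two maps into the same minimal symbolic system $Y_D\subset\Omega_k$, namely $\psi_D$ and the map $\phi_D:\Phi(X)\to Y_D$ supplied by Theorem \ref{thm:Minimal Factor}. Proximality of $(\Phi(X),G)$ (Theorem \ref{thm:X_is_minimal&exteremly_proximal}(\ref{enu:Phi_is_proximal})), inherited by its factor $Y_D$, forces the unique minimal subset of $Y_D\times Y_D$ to be the diagonal, whence $\psi_D(m_0)=\phi_D(c_0)$ for base points with $\pi(m_0)=c_0$. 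Taking the product over all clopen $D$ gives maps $\psi:M(G)\to\Pi$ and $\phi:\Phi(X)\to\Pi$ agreeing at the base point, so $\psi=\phi\circ\pi$ by minimality; since the clopen sets separate points of the zero-dimensional $M(G)$, $\psi$ is injective, hence $\pi$ is injective, hence an isomorphism. Your phrase that proximality ``guarantees a unique minimal image of each type'' gestures at this, but the base-point matching and the separation-of-points argument (both resting on zero-dimensionality of $M(G)$) are the load-bearing steps, and they cannot be recovered from what you wrote.
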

The proof borrows heavily from the proof in \cite{GW03}. The new
ideas (that build on ideas in \cite{GW03}) are presented in subsections
\ref{sub:Order-topology}, \ref{sub:omega_k}, \ref{sub:Minimal-symbolic-systems}

\subsection{\label{sub:Order-topology}Order topology}

Recall that given a set $Y$ and a linear order $<$ on $Y$ there
is a topology generated by the basis of open intervals $(a,b)=\{y\in Y:\, a<y<b\}$
where $a,b\in Y$ and equality is allowed on the left (right) if $a$
$(b)$ is the smallest (biggest) element of $Y$. This topology is
called the \textbf{order topology} on $(Y,<)$. For more details see
\cite{M75} Section 2.3. One of the most important ingredients in
the proof in \cite{GW03} is the fact that the topology on the cantor
set $K$ is the order topology associated with the natural order $<$
on $K\subset[0,1])$. A natural approach to generalizations of the
result in the case of $X=\omega^{\ast}$ the corona, is to look for
an order that will generate the topology on the corona. However, as
the following proposition shows this is impossible.

\begin{prop}
The topology on $\omega^{\ast}$ is not an order topology. \end{prop}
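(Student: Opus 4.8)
The plan is to argue by contradiction. Suppose the topology of $\omega^{\ast}$ coincides with the order topology of some linear order $<$. Since $\omega^{\ast}$ is compact Hausdorff, $(\omega^{\ast},<)$ is then a \emph{compact linearly ordered topological space} (a compact LOTS): $<$ is a complete order with least and greatest elements, and one checks in the usual way that the clopen proper subsets force ``jumps'' so that the clopen initial segments generate the clopen algebra. Before describing the successful route, let me note why the naive approaches are doomed. One would like to manufacture some countable witness incompatible with $\omega^{\ast}$, e.g. a non-trivial convergent sequence (impossible in $\omega^{\ast}$) or a point that is a $G_{\delta}$. But in $\omega^{\ast}$ all the relevant cofinalities and coinitialities along the hypothetical order are uncountable, so no such countable object need appear; any sequential or first-countability argument stalls on exactly this cofinality issue. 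The obstruction must therefore be global and non-sequential.

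The key observation is that $\omega^{\ast}$, whose clopen algebra is $P(\omega)/\mathrm{fin}$, admits a continuous surjection onto a large Cantor cube. Indeed $P(\omega)/\mathrm{fin}$ carries an independent family of size $\mathfrak{c}$ (the image modulo the finite ideal of a classical independent family on $\omega$), hence contains a copy of the free Boolean algebra on $\omega_{1}$ generators; by Stone duality this yields a continuous surjection $q\colon\omega^{\ast}\twoheadrightarrow 2^{\omega_{1}}$. Splitting the index set gives a homeomorphism $2^{\omega_{1}}\cong 2^{\omega_{1}}\times 2^{\omega_{1}}$, a product of two infinite, non-metrizable compacta, so composing with $q$ exhibits a continuous surjection from the compact LOTS $\omega^{\ast}$ onto a product of two infinite compact spaces.

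This is precisely what a compact ordered space cannot do: by a theorem of Treybig, if a compact LOTS maps continuously onto a product $Y_{1}\times Y_{2}$ of two infinite compact Hausdorff spaces, then both $Y_{1}$ and $Y_{2}$ are metrizable. Since $2^{\omega_{1}}$ has weight $\omega_{1}>\omega$ and is therefore not metrizable, we reach a contradiction, and the topology of $\omega^{\ast}$ is not an order topology. The main obstacle — where essentially all the content lies — is this last step: recognizing that the LOTS hypothesis must be defeated by a surjection onto a non-metrizable square rather than by any internal cardinal-function or sequential argument, and then invoking (or reproving, via a $\Delta$-system argument on the finitely many order-cut points that define the images of the $\omega_{1}$ generators inside the generating chain) Treybig's constraint on continuous images of compact ordered spaces. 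The remaining inputs are routine and should be verified in passing: that a compact order topology is genuinely a compact LOTS with a generating chain of clopen initial segments, and that the independent family survives the quotient to $P(\omega)/\mathrm{fin}$.
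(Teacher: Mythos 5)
Your proof is correct, but it takes a genuinely different and much heavier route than the paper's. Your chain --- a size-$\mathfrak{c}$ independent family on $\omega$ whose independence survives the quotient to $P(\omega)/fin$, hence a free Boolean subalgebra on $\omega_{1}$ generators, hence by Stone duality a continuous surjection of $\omega^{\ast}$ onto $2^{\omega_{1}}\cong2^{\omega_{1}}\times2^{\omega_{1}}$, followed by Treybig's theorem that a product of two infinite compact Hausdorff spaces which is a continuous image of a compact ordered space must be metrizable --- is sound, and it actually establishes a strictly stronger conclusion: $\omega^{\ast}$ is not even a \emph{continuous image} of any compact ordered space. The cost is the reliance on Treybig's theorem, a substantial result in its own right. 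The paper's proof, by contrast, is elementary and needs only the classical fact that $\omega^{\ast}$ contains no nontrivial convergent sequences.

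However, your preliminary claim that any sequential argument is ``doomed'' by a cofinality obstruction is mistaken --- and the approach you dismiss is precisely the paper's proof. Any infinite linearly ordered set contains a strictly monotone $\omega$-sequence (apply Ramsey's theorem to any countably infinite subset; reversing the order, which does not change the order topology, one may assume it is increasing), say $p_{1}<p_{2}<p_{3}<\cdots$. Compactness of an order topology forces the order to be complete, so $p=\sup_{k}p_{k}$ exists; and since no $x<p$ can be an upper bound of $\{p_{k}\}$, the sequence is cofinal in $\{x:\,x<p\}$, hence every order-neighborhood of $p$ contains a tail of the sequence. Thus $p_{k}\to p$ with $p\neq p_{k}$ for all $k$: the left cofinality at $p$ is countable \emph{by construction}, so one cannot arrange for ``all the relevant cofinalities'' to be uncountable. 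This yields a nontrivial convergent sequence in $\omega^{\ast}$, which is exactly the contradiction the paper exploits.
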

\begin{proof}
Assume for a contradiction that the topology on $\omega^{\ast}$ is
an order topology associated with a linear order $<$. 
As $\omega^{\ast}$ has no isolated points we can find (with no loss
of generality) an increasing bounded sequence of points
$p_1 < p_2 < p_3 < \cdots < b$. By compactness this sequence 
admits a least upper bound $p = l.u.b\ \{p_k: k=1,2,\dots\}$.
It is easy to check that $p = \lim_{k \to \infty}p_k$,
so that the set $\{p_k: k=1,2,\dots\}\cup \{p\}$ is a closed
subset of $\omega^{\ast}$. 
However, it is well known that the remainder $\omega^{\ast}$ has no nontrivial
converging sequences; e.g. one can use the fact that the closure of the set
$\{p_k: k=1,2,\dots\}$, like the closure of any infinite discrete countable set in $\beta\omega$, is homeomorphic to $\beta\omega$
(see e.g. \cite[Theorem 3.6.14]{E}).
\end{proof}

\subsection{\label{sub:omega_k}The spaces $\Omega_{k}$ and $\tilde{\Omega}_{k}$
and a cocycle equation}

The following subsection is a generalization of Section 3 of \cite{GW03}.
Fix $\alpha=(A_{1},A_{2}\ldots,A_{k})\in\D_{k}$ and define the clopen
subgroup $H_{\alpha}=\{g\in G:\, gA_{i}=A_{i},\, i=1,\ldots,k\}\subset G$.
Consider the discrete homogeneous space of right cosets $H_{\alpha}\backslash G=\{H_{\alpha}g:\, g\in G\}$.
There is a natural bijection $\phi:H_{\alpha}\backslash G\rightarrow\D_{k}$
given by $\phi(H_{\alpha}g)=g^{-1}\alpha$. Let $\Omega_{k}=\{1,-1\}^{\D_{k}}$
equipped with the product topology. This is a $G$-space under the
action $g\omega(\beta)=\omega(g^{-1}\beta)$ for any $\omega\in\Omega_{k}$,
$\beta\in\D_{k}$ and $g\in G$. %
{}

Set $\T{}^{k}=\{1,-1\}^{S_{k}}$. We refer to the elements of $\T{}^{k}$
as \textbf{tables}. Denote $\tilde{\Omega}_{k}=(\T{}^{k})^{\tilde{\D}_{k}}$
equipped with the product topology. This is a $G$-space under the
action $\cdot:G\times\tilde{\Omega}_{k}\rightarrow\tilde{\Omega}_{k}$
given by $g\cdot\tilde{\omega}(\tilde{\beta})(\sigma)=\tilde{\omega}(g^{-1}\tilde{\beta})(\sigma)$
for any $\omega\in\Omega_{k}$, $\tilde{\beta}\in\tilde{\Omega}_{k}$
and $g\in G$. %
{}

There is a natural family of homeomorphisms $\pi_{c}:\Omega_{k}\rightarrow\tilde{\Omega}_{k}$,
$c\in\Phi(X)$ given by $\omega\mapsto\tilde{\omega}^{c}$, (also
denoted $\tilde{\omega}$ when no confusion arises) where for $\tilde{\beta}=\{B_{1},B_{2}\ldots,B_{k}\}\in\tilde{D}_{k}$
and $\sigma\in S_{k}$, $\tilde{\omega}(\tilde{\beta})(\sigma)=\omega(\sigma^{-1}t_{c}^{\ast}(\tilde{\beta}))$
($t_{c}^{\ast}(\cdot)$ is defined after Definition \ref{def:Induced order}).
In order for $\pi_{c}$ to be a $G$-homeomorphism we need to equip
$\tilde{\Omega}_{k}$ with a different $G$-action than the natural
$G$-action mentioned above. %
{}
Namely {}$\bullet_{c}:G\times\tilde{\Omega}_{k}\rightarrow\tilde{\Omega}_{k}$,
is defined by

\[
g\bullet_{c}\tilde{\omega}(\tilde{\beta})(\sigma)=\tilde{\omega}(g^{-1}\tilde{\beta})(\rho_{c}(g,\tilde{\beta})\sigma)=\omega(\sigma^{-1}\rho_{c}(g,\tilde{\beta}){}^{-1}t_{c}^{\ast}(g^{-1}\tilde{\beta}))\]

where $\rho_{c}:G\times\tilde{\Omega}_{k}\rightarrow S_{k}$ is defined
uniquely by the equation: \[
\rho_{c}(g,\tilde{\beta}){}^{-1}t_{c}^{\ast}(g^{-1}\tilde{\beta})=g^{-1}t_{c}^{\ast}(\tilde{\beta})\]
 As $g\bullet_{c}\tilde{\omega}(\tilde{\beta})(\sigma)=\omega(\sigma^{-1}g^{-1}t_{c}^{\ast}(\tilde{\beta}))$,
we have the equality $g\bullet_{c}\tilde{\omega}(\tilde{\beta})(\sigma)=\widetilde{g\omega}(\tilde{\beta})(\sigma)$
which makes $\pi_{c}:(G,\Omega_{k})\rightarrow(G,\tilde{\Omega}_{k})$
a $G$-homeomorphism (and formally proves $g\bullet_{c}$ is indeed
a $G$-action).
\begin{lem}
\textup{\label{lem:cocycle}$\rho_{c}:G\times\tilde{\Omega}_{k}\rightarrow S_{k}$
obeys the following } \textbf{cocyle}\textup{ equation:} \[
\rho_{c}(gh,\tilde{\beta})=\rho_{c}(g,\tilde{\beta})\rho_{c}(h,g^{-1}\tilde{\beta})\]
 \end{lem}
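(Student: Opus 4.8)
The plan is to read off $\rho_{c}(gh,\tilde{\beta})$ directly from its defining relation and to rewrite the right-hand side $(gh)^{-1}t_{c}^{\ast}(\tilde{\beta})$ into the shape $(\rho_{c}(g,\tilde{\beta})\rho_{c}(h,g^{-1}\tilde{\beta}))^{-1}\,t_{c}^{\ast}((gh)^{-1}\tilde{\beta})$, after which the uniqueness built into the definition of $\rho_{c}$ forces the asserted identity. Throughout I abbreviate $\tau=t_{c}^{\ast}$ and set $\sigma_{1}=\rho_{c}(g,\tilde{\beta})$ and $\sigma_{2}=\rho_{c}(h,g^{-1}\tilde{\beta})$. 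The two instances of the defining equation I would use are
\[
\sigma_{1}^{-1}\,\tau(g^{-1}\tilde{\beta})=g^{-1}\tau(\tilde{\beta})
\qquad\text{and}\qquad
\sigma_{2}^{-1}\,\tau(h^{-1}g^{-1}\tilde{\beta})=h^{-1}\tau(g^{-1}\tilde{\beta}),
\]
the second being the definition of $\rho_{c}$ evaluated at the pair $(h,g^{-1}\tilde{\beta})$.

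Starting from $(gh)^{-1}\tau(\tilde{\beta})=h^{-1}\bigl(g^{-1}\tau(\tilde{\beta})\bigr)$, I would substitute the first relation to get $h^{-1}\bigl(\sigma_{1}^{-1}\tau(g^{-1}\tilde{\beta})\bigr)$. The crucial move is to commute the $G$-action past the $S_{k}$-action: since the $S_{k}$-action on $\D_{k}$ commutes with the $G$-action (the relation $\sigma g\beta=g\sigma\beta$ recorded in the Preliminaries), this equals $\sigma_{1}^{-1}\bigl(h^{-1}\tau(g^{-1}\tilde{\beta})\bigr)$. Substituting the second relation replaces the inner factor by $\sigma_{2}^{-1}\tau(h^{-1}g^{-1}\tilde{\beta})$, and using $h^{-1}g^{-1}\tilde{\beta}=(gh)^{-1}\tilde{\beta}$ I arrive at $\sigma_{1}^{-1}\bigl(\sigma_{2}^{-1}\tau((gh)^{-1}\tilde{\beta})\bigr)$.

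Composing the two $S_{k}$-actions and recalling that the action $(B_{1},\ldots,B_{k})\mapsto(B_{\sigma(1)},\ldots,B_{\sigma(k)})$ permutes indices and hence composes contravariantly, i.e. $\sigma_{1}^{-1}(\sigma_{2}^{-1}\cdot Y)=(\sigma_{1}\sigma_{2})^{-1}\cdot Y$, I obtain
\[
(gh)^{-1}\tau(\tilde{\beta})=(\sigma_{1}\sigma_{2})^{-1}\,\tau\bigl((gh)^{-1}\tilde{\beta}\bigr).
\]
Comparing this with the defining relation $\rho_{c}(gh,\tilde{\beta})^{-1}\tau((gh)^{-1}\tilde{\beta})=(gh)^{-1}\tau(\tilde{\beta})$ and invoking uniqueness — both $\tau((gh)^{-1}\tilde{\beta})$ and $(gh)^{-1}\tau(\tilde{\beta})$ are orderings of the same $k$ pairwise disjoint nonempty clopen sets, so the permutation relating them is unique — yields $\rho_{c}(gh,\tilde{\beta})=\sigma_{1}\sigma_{2}=\rho_{c}(g,\tilde{\beta})\rho_{c}(h,g^{-1}\tilde{\beta})$, as required.

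I expect no conceptual obstacle here: the entire content of the lemma is the interchange of the $G$- and $S_{k}$-actions, which is precisely the commutation relation from the Preliminaries. The one place demanding care is the bookkeeping of the $S_{k}$-action convention: because $\sigma$ acts on \emph{indices} it acts contravariantly, so the order of the factors $\sigma_{1}\sigma_{2}$ (rather than $\sigma_{2}\sigma_{1}$) is dictated by the rule $\sigma_{1}^{-1}(\sigma_{2}^{-1}\cdot Y)=(\sigma_{1}\sigma_{2})^{-1}\cdot Y$, and getting this right is exactly what makes the product come out in the stated order.
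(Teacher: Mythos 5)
Your proof is correct. It reaches the same key intermediate identity as the paper, namely
\[
\rho_{c}(gh,\tilde{\beta})^{-1}t_{c}^{\ast}\bigl((gh)^{-1}\tilde{\beta}\bigr)=h^{-1}\rho_{c}(g,\tilde{\beta})^{-1}t_{c}^{\ast}(g^{-1}\tilde{\beta}),
\]
but by a more direct route. The paper obtains this identity by going through the symbolic space $\tilde{\Omega}_{k}$: it expands the equation $gh\bullet_{c}\tilde{\omega}=g\bullet_{c}\widetilde{h\omega}$ (which rests on the $G$-equivariance of $\pi_{c}$) as functions of an arbitrary $\omega\in\Omega_{k}$, and then cancels $\omega$ and $\sigma$ to pass to an equation between ordered partitions; from there it finishes exactly as you do, substituting the defining relation of $\rho_{c}$ at $(h,g^{-1}\tilde{\beta})$ and collapsing the nested permutation actions. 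You bypass $\Omega_{k}$, $\tilde{\Omega}_{k}$ and $\bullet_{c}$ entirely, deriving the identity from nothing but the defining relation of $\rho_{c}$ applied at $(g,\tilde{\beta})$ and $(h,g^{-1}\tilde{\beta})$, associativity $(gh)^{-1}=h^{-1}g^{-1}$, and the $G$--$S_{k}$ commutation; uniqueness of the relating permutation (the blocks of a partition are distinct) then closes the argument. What each approach buys: yours is self-contained and makes transparent that the cocycle equation is a property of $\rho_{c}$ alone, prior to and independent of the action on tables; the paper's derivation doubles as a consistency check that $\bullet_{c}$, defined through $\rho_{c}$, really is the transport of the natural $G$-action under $\pi_{c}$. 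You also handle explicitly the one genuinely delicate point, which the paper leaves implicit: since $S_{k}$ acts on indices, nested applications compose contravariantly, $\sigma_{1}^{-1}(\sigma_{2}^{-1}\cdot Y)=(\sigma_{1}\sigma_{2})^{-1}\cdot Y$, and this is precisely what produces the product in the order $\rho_{c}(g,\tilde{\beta})\rho_{c}(h,g^{-1}\tilde{\beta})$ rather than its reverse; this matches the paper's step $\rho_{c}(gh,\tilde{\beta})^{-1}=\rho_{c}(h,g^{-1}\tilde{\beta})^{-1}\rho_{c}(g,\tilde{\beta})^{-1}$.
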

\begin{proof}
By definition we have $gh\bullet_{c}\tilde{\omega}(\tilde{\beta})(\sigma)=\widetilde{gh\omega}(\tilde{\beta})(\sigma)=g\bullet_{c}\widetilde{h\omega}(\tilde{\beta})(\sigma)$.
Notice \[
gh\bullet_{c}\tilde{\omega}(\tilde{\beta})(\sigma)=\omega(\sigma^{-1}\rho_{c}(gh,\tilde{\beta}){}^{-1}t_{c}^{\ast}(h^{-1}g^{-1}\tilde{\beta})),\]
 whereas \[
g\bullet_{c}\widetilde{h\omega}(\tilde{\beta})(\sigma)=h\omega(\sigma^{-1}\rho_{c}(g,\tilde{\beta}){}^{-1}t_{c}^{\ast}(g^{-1}\tilde{\beta}))=\omega(\sigma^{-1}h^{-1}\rho_{c}(g,\tilde{\beta}){}^{-1}t_{c}^{\ast}(g^{-1}\tilde{\beta})).\]
 This implies \[
\rho_{c}(gh,\tilde{\beta}){}^{-1}t_{c}^{\ast}(h^{-1}g^{-1}\tilde{\beta})=h^{-1}\rho_{c}(g,\tilde{\beta}){}^{-1}t_{c}^{\ast}(g^{-1}\tilde{\beta}).\]
 As $\rho_{c}(h,g^{-1}\tilde{\beta}){}^{-1}t_{c}^{\ast}(h^{-1}g^{-1}\tilde{\beta})=h^{-1}t_{c}^{\ast}(g^{-1}\tilde{\beta})$,
we have \[
\rho_{c}(gh,\tilde{\beta}){}^{-1}=\rho_{c}(h,g^{-1}\tilde{\beta}){}^{-1}\rho_{c}(g,\tilde{\beta}){}^{-1}.\]
 Taking the inverses we get $\rho_{c}(gh,\tilde{\beta})=\rho_{c}(g,\tilde{\beta})\rho_{c}(h,g^{-1}\tilde{\beta})$
\end{proof}
Note that in the end of Section 3 of \cite{GW03} it was mistakenly
claimed that $g\bullet_{c_{0}}\tilde{\omega}(\tilde{\beta})(\sigma)=g\cdot\tilde{\omega}(\tilde{\beta})(\sigma)$,
for $c_{0}=\{[0,t]\cap K\}_{t\in[0,1]}$ where $K$, the Cantor set,
is embedded naturally in $[0,1]$.

\subsection{The Dual Ramsey Theorem}

A partition $\gamma=(C_{1},\ldots,C_{k})$ of $\{1,\ldots,s\}$ into
$k$ nonempty sets is\textbf{ naturally ordered }if for any $1\leq i<j\leq k$,
$\min(C_{i})<\min(C_{j}).$ We denote by $\Pi\binom{s}{k}$ the collection
of naturally ordered partitions of $\{1,\ldots,s\}$ into $k$ nonempty
sets.
\begin{defn}
Let $\beta=(B_{1},\ldots,B_{s})\in\Pi\binom{k}{s}$ and $\gamma=(C_{1},\ldots,C_{k})\in\Pi\binom{m}{k}$
define the \textbf{amalgamated partition} $\gamma_{\beta}=(G_{1},\ldots,G_{s})\in\Pi\binom{m}{s}$
by: 
\[
G_{j}=\bigcup_{i\in B_{j}}C_{i}\]

\end{defn}
Notice $\gamma_{\beta}$ is naturally ordered and $(\Pcal_{\gamma})_{\beta}=\Pcal_{\gamma_{\beta}}.$
Similarly for $\alpha=(A_{1},A_{2}\ldots,A_{m})\in\D$ define the
\textbf{amalgamated clopen cover } $\alpha_{\gamma}=({G}_{1},{G}_{1}\ldots,{G}_{k})$,
where ${G}_{j}=\bigcup_{i\in C_{j}}A_{i}$. Notice that 
${(\alpha}_{\gamma})_{\beta}={\alpha}_{\gamma_{\beta}}.$

We denote by $\tilde{\Pi}\binom{s}{k}$ the collection of unordered
partitions of $\{1,\ldots,s\}$ into $k$ nonempty sets. Notice there
is a natural bijection $\tilde{\Pi}\binom{s}{k}\leftrightarrow\Pi\binom{s}{k}$.
\begin{thm}
\label{thm:Dual_Ramsey}{[}The dual Ramsey Theorem{]} Given positive
integers $k,m,r$ there exists a positive integer $N=DR(k,m,r)$ with
the following property: for any coloring of $\tilde{\Pi}\binom{N}{k}$
by $r$ colors there exists a partition $\alpha=\{A_{1},A_{2},\dots,A_{m}\}\in\tilde{\Pi}\binom{N}{m}$
of $N$ into $m$ non-empty sets such that all the partitions of $N$
into $k$ non-empty sets (i.e. elements of $\tilde{\Pi}\binom{N}{k}$)
whose atoms are measurable with respect to $\alpha$ (i.e. each equivalence
class is a union of elements of $\alpha$) have the same color. \end{thm}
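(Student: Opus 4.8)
The plan is to translate the statement into the language of rigid surjections and then invoke the Graham--Rothschild parameter-set theorem, whose combinatorial engine is the Hales--Jewett theorem. The first step is to record the bijection $\tilde{\Pi}\binom{N}{k}\leftrightarrow\Pi\binom{N}{k}$ already noted in the text, so that it suffices to color naturally ordered partitions. A naturally ordered partition of $\{1,\ldots,N\}$ into $k$ nonempty blocks is exactly a rigid surjection $f\colon\{1,\ldots,N\}\to\{1,\ldots,k\}$ (one with $\min f^{-1}(i)<\min f^{-1}(j)$ whenever $i<j$), and under this identification the amalgamation operation $\gamma\mapsto\gamma_{\beta}$ corresponds to composition of surjections. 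In the Graham--Rothschild formalism these rigid surjections are precisely the $k$-parameter words of length $N$ over the empty alphabet, and a partition $\alpha\in\Pi\binom{N}{m}$ together with the $k$-block coarsenings measurable with respect to it is exactly an $m$-parameter word together with the family of $k$-parameter words it contains.

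With this dictionary in place, the theorem becomes the special case of the Graham--Rothschild partition theorem for parameter sets over the empty alphabet. Since the inductive step below enlarges the alphabet, I would actually establish the full Graham--Rothschild theorem for all finite alphabets simultaneously, by induction on the dimension $k$. The base case $k=0$ over a nonempty alphabet is the Hales--Jewett theorem in its subspace form: any finite coloring of the points of a sufficiently high-dimensional combinatorial cube admits a monochromatic $m$-dimensional combinatorial subspace, and this subspace version follows from the one-dimensional Hales--Jewett theorem by iterating it along $m$ fresh coordinate blocks. This supplies the number $N$ as an explicit, if enormous, iterated Hales--Jewett function of $k,m,r$.

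The inductive step from $k-1$ to $k$ is where the real work lies, and I expect it to be the main obstacle. The idea is to split a $k$-parameter word into its first free coordinate direction and the remaining $k-1$ directions: the color of a $k$-parameter subobject is then governed by a coloring of $(k-1)$-parameter subobjects of an auxiliary structure built over an \emph{enlarged} alphabet (the original empty alphabet augmented by the symbols that encode the suppressed first parameter, which is exactly why the nonempty-alphabet instances of the theorem must be carried along in the induction). Applying the induction hypothesis to this auxiliary coloring, and then a further Hales--Jewett step to absorb the enlarged alphabet back into genuine parameters, one stabilizes the color across all $k$-block coarsenings simultaneously. Carrying out this amalgamation correctly---keeping the rigidity (natural ordering) of the surjections intact while merging the two layers of control---is the delicate bookkeeping the proof must handle; once it is done, pulling the conclusion back through the surjection/partition dictionary of the first step yields the desired $\alpha\in\tilde{\Pi}\binom{N}{m}$ all of whose $k$-block coarsenings are monochromatic.
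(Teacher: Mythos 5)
First, a point of comparison: the paper does not prove this theorem at all. Its entire proof reads ``This is Corollary 10 of \cite{GR71}''; the dual Ramsey theorem enters the paper as an external black box, precisely because its proof is long and lies outside the paper's concerns. So your proposal is not an alternative rendering of the paper's argument --- it is an attempt to supply a proof that the authors deliberately outsourced to Graham and Rothschild.

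Judged as such an attempt, your translation layer is correct and standard: naturally ordered $k$-block partitions of $\{1,\dots,N\}$ are the same thing as rigid surjections $\{1,\dots,N\}\to\{1,\dots,k\}$, these are exactly the $k$-parameter words of length $N$ over the empty alphabet, and the paper's amalgamation $\gamma\mapsto\gamma_{\beta}$ is composition of the corresponding surjections (the block $G_{j}=\bigcup_{i\in B_{j}}C_{i}$ is the preimage of $j$ under the composite). The plan --- prove the Graham--Rothschild parameter-set theorem for all finite alphabets simultaneously, by induction on the dimension $k$, with the Hales--Jewett theorem as base case and engine --- is also the standard route, the one followed in the published accounts of Graham--Rothschild, Pr\"omel--Voigt and Todorcevic. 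The genuine gap is that the inductive step, which is the entire mathematical content of the theorem, is never carried out: you describe it in one sentence (split off the first parameter, apply the induction hypothesis over an enlarged alphabet, then ``a further Hales--Jewett step to absorb the enlarged alphabet back into genuine parameters'') and then explicitly defer the ``delicate bookkeeping.'' But that bookkeeping \emph{is} the theorem. One must exhibit the auxiliary coloring precisely, show that the color of a $k$-parameter subword is determined by data the induction hypothesis can stabilize, and prove that letters of the enlarged alphabet can be re-promoted to parameters without destroying rigidity (the natural ordering of first occurrences); this is where all of the known difficulty concentrates, and it occupies several pages in every published proof. As written, your proposal establishes only the correct but routine equivalence between the paper's statement and the Graham--Rothschild theorem, whose proof it then sketches rather than gives. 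To close the gap you must either execute that induction in full or, like the paper, simply cite \cite{GR71}.
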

\begin{proof}
This is Corollary 10 of \cite{GR71}.
\end{proof}

\subsection{\label{sub:Minimal-symbolic-systems}Minimal symbolic systems}

In the beginning of Section 4 of \cite{GW03} a family of mappings
$\phi_{T}:(G,\Phi(X))\rightarrow(G,\Omega_{k}),T\in\T^{k}$ are introduced.
We will introduce a generalized family but using a different description.


\begin{defn}
%
{}

Let $\beta\in\D$ and $c\in\Phi(X)$, define the \textbf{${\beta}$-ratio}
of $c$, to be the unique element $\theta_{{\beta}}(c)\in S_{k}$
so that: \[
\theta_{{\beta}}(c){\beta}=t_{c}^{\ast}({\beta})\]
 \end{defn}
\begin{lem}
\label{lem:sig_bet_sig_gbet} The following holds:
\begin{enumerate}
\item $\theta_{{\beta}}(c)=\theta_{g{\beta}}(gc)$ for $c\in\Phi(X)$, $g\in G$
and ${\beta}\in\D$.
\item $\theta_{\sigma^{-1}t_{c}^{\ast}(\tilde{\beta})}(c)=\sigma$ for $\sigma\in S_{k}$,
$\tilde{\beta}\in\tilde{\D}$ and $c\in\Phi(X)$.
\end{enumerate}
\end{lem}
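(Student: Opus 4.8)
The plan is to read both statements off the defining equation $\theta_{\beta}(c)\beta = t_{c}^{\ast}(\beta)$, using the equivariance already available for $t_{c}^{\ast}$ and, crucially, the \emph{uniqueness} of the permutation $\theta_{\beta}(c)$. That uniqueness is what turns each computation into an identification: since $\beta\in\D_{k}$ consists of $k$ distinct nonempty clopen sets, $S_{k}$ acts freely on the orderings of $\tilde{t}(\beta)$, so at most one $\sigma\in S_{k}$ can satisfy $\sigma\beta = t_{c}^{\ast}(\beta)$. Thus in each part it suffices to exhibit \emph{some} permutation satisfying the relevant defining equation and invoke freeness. I should also record at the outset that $t_{c}^{\ast}(\beta)$ depends on $\beta$ only through $\tilde{t}(\beta)$, and that $\tilde{t}$ is $G$-equivariant, i.e. $g\tilde{t}(\beta)=\tilde{t}(g\beta)$.

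For part (1) I would apply $g$ to both sides of $\theta_{\beta}(c)\beta = t_{c}^{\ast}(\beta)$. On the right, the equivariance relation $g\,t_{c}^{\ast}(\tilde{\gamma}) = t_{gc}^{\ast}(g\tilde{\gamma})$ proved above, taken with $\tilde{\gamma}=\tilde{t}(\beta)$ and combined with the $G$-equivariance of $\tilde{t}$, yields $g\,t_{c}^{\ast}(\beta) = t_{gc}^{\ast}(g\beta)$. On the left, the commutation of the $S_{k}$-action with the $G$-action, $\sigma g\beta = g\sigma\beta$, lets me move $g$ through $\theta_{\beta}(c)$, giving $g(\theta_{\beta}(c)\beta) = \theta_{\beta}(c)(g\beta)$. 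Comparing the two sides produces $\theta_{\beta}(c)(g\beta) = t_{gc}^{\ast}(g\beta)$, which is precisely the defining equation for $\theta_{g\beta}(gc)$. Uniqueness (the sets $gA_{i}$ are again distinct) then forces $\theta_{\beta}(c) = \theta_{g\beta}(gc)$.

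For part (2) I set $\beta' = \sigma^{-1}t_{c}^{\ast}(\tilde{\beta})$. Reordering an ordered partition does not change its underlying unordered partition, so $\tilde{t}(\beta') = \tilde{\beta}$, whence $t_{c}^{\ast}(\beta') = t_{c}^{\ast}(\tilde{\beta})$. It remains only to verify that $\sigma$ itself satisfies the defining equation $\sigma\beta' = t_{c}^{\ast}(\beta')$, i.e. $\sigma\bigl(\sigma^{-1}t_{c}^{\ast}(\tilde{\beta})\bigr) = t_{c}^{\ast}(\tilde{\beta})$; this is the cancellation of an inverse pair in the $S_{k}$-action and holds because $\sigma\sigma^{-1}=\sigma^{-1}\sigma = e$. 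Freeness of the $S_{k}$-action then identifies $\theta_{\beta'}(c)=\sigma$, which is the claim.

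I do not expect a genuine obstacle here: the statement is a formal consequence of the definitions. The one place demanding care is the composition convention for the $S_{k}$-action, since the reindexing $\sigma(B_{1},\dots,B_{k})=(B_{\sigma(1)},\dots,B_{\sigma(k)})$ composes as $\tau(\rho\gamma)=(\rho\tau)\gamma$ rather than $(\tau\rho)\gamma$. In part (2) the cancellation is convention-independent because only an inverse pair is involved, but in part (1) one must keep track of the side on which $g$ passes through $\theta_{\beta}(c)$, which is exactly what the commutation relation $\sigma g = g\sigma$ controls.
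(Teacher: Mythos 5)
Your proof is correct and follows essentially the same route as the paper's: part (1) combines Lemma \ref{lem:gt=00003D00003D00003D00003D00003D00003D00003D00003D00003D00003D00003Dtg} with the commutation of the $G$- and $S_{k}$-actions and concludes by uniqueness, and part (2) reduces to the cancellation $\sigma\sigma^{-1}=e$ after noting $t_{c}^{\ast}$ depends only on the underlying unordered partition. Your explicit appeal to freeness of the $S_{k}$-action and the remark on the composition convention merely make precise what the paper leaves implicit in the phrase ``defined uniquely.''
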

\begin{proof}
$ $
\begin{enumerate}
\item By definition $\theta_{g{\beta}}(gc)g{\beta}=t_{gc}^{\ast}(g{\beta})$.
By Lemma \ref{lem:gt=00003D00003D00003D00003D00003D00003D00003D00003D00003D00003D00003Dtg},
$gt_{c}^{\ast}({\beta})=t_{gc}^{\ast}(g{\beta})$ and therefore one
has $\theta_{g{\beta}}(gc)g{\beta}=gt_{c}^{\ast}({\beta})$. As the
$G$ and $S_{k}$ actions commute it implies $\theta_{g{\beta}}(gc){\beta}=t_{c}^{\ast}({\beta})$.
By definition $\theta_{{\beta}}(c){\beta}=t_{c}^{\ast}({\beta})$
and we conclude $\theta_{{\beta}}(c)=\theta_{g{\beta}}(gc)$.
\item $\theta_{\sigma^{-1}t_{c}^{\ast}(\tilde{\beta})}(c)\sigma^{-1}t_{c}^{\ast}(\tilde{\beta})=t_{c}^{\ast}(\sigma^{-1}t_{c}^{\ast}(\tilde{\beta}))$
\end{enumerate}
\end{proof}
Let $T\in\T^{k}$. Define $\phi_{T}:\Phi(X)\rightarrow\Omega_{k}$
by \[
\phi_{T}(c)({\beta})=T(\theta_{{\beta}}(c))\]

\begin{lem}
$\phi_{T}:\Phi(X)\rightarrow\Omega_{k}$ is continuous and $G$-equivariant.
\end{lem}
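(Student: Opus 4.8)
The plan is to verify the two assertions separately, each reducing to one of the lemmas already established. Throughout, recall that $\theta_{\beta}(c)\in S_{k}$ is characterized by $\theta_{\beta}(c)\beta=t_{c}^{\ast}(\beta)$, so it records precisely the order that $c$ induces on the atoms of $\beta$, and that $\phi_{T}(c)(\beta)=T(\theta_{\beta}(c))$.

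For $G$-equivariance I would show $\phi_{T}(gc)=g\phi_{T}(c)$ by comparing values at an arbitrary $\beta\in\D_{k}$. Unravelling the action on $\Omega_{k}$ gives $(g\phi_{T}(c))(\beta)=\phi_{T}(c)(g^{-1}\beta)=T(\theta_{g^{-1}\beta}(c))$, whereas directly $\phi_{T}(gc)(\beta)=T(\theta_{\beta}(gc))$. Thus the claim reduces to the identity $\theta_{\beta}(gc)=\theta_{g^{-1}\beta}(c)$, which is exactly Lemma \ref{lem:sig_bet_sig_gbet}(1) with $\beta$ replaced by $g^{-1}\beta$: it yields $\theta_{g^{-1}\beta}(c)=\theta_{g(g^{-1}\beta)}(gc)=\theta_{\beta}(gc)$. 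Since $\beta$ was arbitrary, equivariance follows at once.

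For continuity, note that $\Omega_{k}=\{1,-1\}^{\D_{k}}$ carries the product topology over the discrete two-point space, so it suffices to check that each coordinate map $c\mapsto\phi_{T}(c)(\beta)=T(\theta_{\beta}(c))$ is continuous for fixed $\beta\in\D_{k}$. Because $T$ is a fixed function on the finite set $S_{k}$ and $\{1,-1\}$ is discrete, it is enough to prove that $c\mapsto\theta_{\beta}(c)$ is locally constant (i.e.\ continuous into discrete $S_{k}$). Here Lemma \ref{lem:order_determines_neighborhood} does the work: fixing $c_{0}$ and setting $\sigma_{0}=\theta_{\beta}(c_{0})$, so that $\sigma_{0}\beta=t_{c_{0}}^{\ast}(\beta)$, that lemma gives $c_{0}\in\mathfrak{U}_{\sigma_{0}\beta}$, and moreover that every $c\in\mathfrak{U}_{\sigma_{0}\beta}$ induces on the ordered partition $\sigma_{0}\beta$ the usual order. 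Reading off the definition of $t_{c}^{\ast}$, this says precisely that $t_{c}^{\ast}(\beta)=\sigma_{0}\beta$, hence $\theta_{\beta}(c)=\sigma_{0}$ throughout the clopen neighborhood $\mathfrak{U}_{\sigma_{0}\beta}$ of $c_{0}$. Thus $\theta_{\beta}$ is locally constant and $\phi_{T}$ is continuous.

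The routine parts are genuinely routine; the only point requiring care is the continuity bookkeeping, namely recognizing that membership in $\mathfrak{U}_{t_{c_{0}}^{\ast}(\beta)}$ forces the induced order, and hence the value $\theta_{\beta}(c)$, to remain constant on a whole neighborhood rather than merely being pinned down at $c_{0}$. Once $\mathfrak{U}_{\sigma_{0}\beta}$ is correctly identified with the level set $\{c:\theta_{\beta}(c)=\sigma_{0}\}$ via Lemma \ref{lem:order_determines_neighborhood}, both assertions close immediately.
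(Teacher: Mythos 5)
Your proof is correct and follows essentially the same route as the paper: equivariance is reduced to the identity $\theta_{\beta}(gc)=\theta_{g^{-1}\beta}(c)$ via Lemma \ref{lem:sig_bet_sig_gbet}(1), exactly as in the paper's proof. Your continuity argument, showing via Lemma \ref{lem:order_determines_neighborhood} that $c\mapsto\theta_{\beta}(c)$ is constant on the clopen neighborhood $\mathfrak{U}_{t_{c_{0}}^{\ast}(\beta)}$, is the same mechanism the paper uses, merely organized coordinate-by-coordinate rather than over basic open sets of $\Omega_{k}$ determined by finitely many coordinates at once.
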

\begin{proof}
We start by showing that $\phi_{T}$ is continuous. Let $n\in\mathbb{N}$,
$\epsilon_{1},\epsilon_{2},\ldots\epsilon_{n}\in\{\pm1\}$, ${\beta}_{1},{\beta}_{2},\ldots{\beta}_{n}\in\D_{k}$.
Let $V$ be an open set of $\Omega_{k}$ so that $V=\{\omega\in\Omega_{k}:\,\omega({\beta}_{i})=\epsilon_{i}\}$
and assume $V\neq\emptyset$. Let $c_{1}\in\phi_{T}^{-1}(V)$. Denote
$\mathfrak{{U}=\bigcap_{i=1}^{n}}{\mathfrak{U}}_{t_{c_{1}}^{\ast}({\beta})}$.
By Lemma \ref{lem:order_determines_neighborhood} $c_{1}\in{\mathfrak{U}}$
so $\mathfrak{U}\neq\emptyset$. We claim $\phi_{T}(\mathfrak{U})\subset V$.
Indeed let $c_{2}\in{\mathfrak{U}}$ and fix $i$. By assumption $c_{2}\in{\mathfrak{U}}_{t_{c_{1}}^{\ast}({\beta}_{i})}$.
By Lemma \ref{lem:order_determines_neighborhood} $c_{2}\in{\mathfrak{U}}_{t_{c_{2}}^{\ast}({\beta}_{i})}$.
Conclude $t_{c_{1}}^{\ast}({\beta})=t_{c_{2}}^{\ast}({\beta})$, which
implies $\theta_{{\beta}_{i}}(c_{1})=\theta_{{\beta}_{i}}(c_{2})$.
This in turn implies $\phi_{T}(c_{1})({\beta}_{i})=\phi_{T}(c_{2})({\beta}_{i})=\epsilon_{i}$.

To show $G$-equivariance one has to show $g\phi_{T}(c)({\beta})=\phi_{T}(c)(g^{-1}{\beta})=\phi_{T}(gc)({\beta})$.
By definition $\phi_{T}(c)(g^{-1}{\beta})=T(\theta_{g^{-1}{\beta}}(c))$
whereas $\phi_{T}(gc)({\beta})=T(\theta_{{\beta}}(gc))$.
By Lemma \ref{lem:sig_bet_sig_gbet} $\theta_{{\beta}}(gc)=\theta_{g^{-1}{\beta}}(c)$.
\end{proof}
Let $c_{0}\in\Phi(X)$. We will investigate $\pi_{c_{0}}\circ\phi_{T}$.
By definition $\tilde{\omega}^{c_{0}}(\tilde{\beta})(\sigma)=\omega(\sigma^{-1}t_{c}^{\ast}(\tilde{\beta}))$
and therefore we have $\widetilde{\phi_{T}(c)}^{c_{0}}(\tilde{\beta})(\sigma)=T(\theta_{\sigma^{-1}t_{c_{0}}^{\ast}(\tilde{\beta})}(c))$.
By Lemma \ref{lem:sig_bet_sig_gbet}

\[
\widetilde{\phi_{T}(c_{0})}^{c_{0}}(\tilde{\beta})(\sigma)=\sigma
\]

In particular $\widetilde{\phi_{T}(c_{0})}^{c_{0}}(\tilde{\beta})(\sigma)$
\textit{does not depend} on $\tilde{\beta}$ and we denote it by $\tilde{\omega}_{T}$.

The following theorem is based on Theorem 4.1 of \cite{GW03}:
\begin{thm}
\label{thm:Minimal Factor}Every minimal subsystem of $(G,\Omega_{k})$
is a factor of $(G,\Phi(X))$. \end{thm}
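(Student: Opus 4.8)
The plan is to prove the stronger statement that \emph{every} minimal subsystem $M\subset\Omega_{k}$ is equal to $\phi_{T}(\Phi(X))$ for some table $T\in\T^{k}$. Since $(\Phi(X),G)$ is minimal (Theorem \ref{thm:X_is_minimal&exteremly_proximal}) and $\phi_{T}$ is continuous and $G$-equivariant, the image $\phi_{T}(\Phi(X))$ is automatically a minimal subsystem of $\Omega_{k}$, and the restriction $\phi_{T}\colon\Phi(X)\to\phi_{T}(\Phi(X))$ is a factor map; so the equality $M=\phi_{T}(\Phi(X))$ exhibits $M$ as a factor of $(\Phi(X),G)$. Because two minimal sets are either equal or disjoint, it suffices to produce a single point lying in both $M$ and some $\phi_{T}(\Phi(X))$, i.e. to find $T\in\T^{k}$ and $c\in\Phi(X)$ with $\phi_{T}(c)\in M$. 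Fixing an arbitrary $\omega\in M$, minimality gives $\overline{G\omega}=M$, so the task reduces to exhibiting some $\phi_{T}(c)$ in the orbit closure of $\omega$.

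The combinatorial heart is a Dual Ramsey homogenization. Fix a base chain $c_{0}$ and set $\tilde{\omega}=\pi_{c_{0}}(\omega)$. Given a target partition $\mathcal{W}\in\D_{p}$ (with $p\ge k$) whose ordered $k$-coarsenings form the window I wish to control, I would pick a clopen partition $\alpha\in\D_{N}$ with $N=DR(k,p,2^{k!})$ and color each $\tilde{\gamma}\in\tilde{\Pi}\binom{N}{k}$ by the table $\tilde{\omega}(\alpha_{\tilde{\gamma}})\in\T^{k}$; there are $|\T^{k}|=2^{k!}$ colors, and this table records the values of $\omega$ on all orderings of the amalgamated cover $\alpha_{\tilde{\gamma}}$. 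Theorem \ref{thm:Dual_Ramsey} then yields a coarsening $\delta\in\tilde{\Pi}\binom{N}{p}$ all of whose $k$-coarsenings receive one common table $T$. Writing $\mathcal{W}'=\alpha_{\delta}\in\D_{p}$ and using $(\alpha_{\gamma})_{\beta}=\alpha_{\gamma_{\beta}}$, this says $\tilde{\omega}(\mathcal{W}'_{\tilde{\gamma}})=T$ for every $k$-coarsening $\tilde{\gamma}$ of $\mathcal{W}'$; unwinding the definition of $\pi_{c_{0}}$ and applying Lemma \ref{lem:sig_bet_sig_gbet}, this is exactly $\omega(\mathcal{W}'_{\gamma})=T(\theta_{\mathcal{W}'_{\gamma}}(c_{0}))=\phi_{T}(c_{0})(\mathcal{W}'_{\gamma})$ for every ordered $k$-coarsening $\gamma$ of $\mathcal{W}'$.

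I would then transport this from the homogenized partition $\mathcal{W}'$ to the prescribed window $\mathcal{W}$. Since $X$ is partition-homogeneous (Proposition \ref{pro:h_hom_eq_part_hom}) and both are partitions into $p$ parts, there is $g\in G$ with $g^{-1}\mathcal{W}=\mathcal{W}'$; as the $G$-action preserves the amalgamation pattern one gets $g^{-1}\mathcal{W}_{\gamma}=\mathcal{W}'_{\gamma}$, hence $g\omega(\mathcal{W}_{\gamma})=\omega(\mathcal{W}'_{\gamma})=T(\theta_{\mathcal{W}'_{\gamma}}(c_{0}))$. Lemma \ref{lem:sig_bet_sig_gbet}(1) (equivariance of $\theta$) rewrites $\theta_{\mathcal{W}'_{\gamma}}(c_{0})=\theta_{\mathcal{W}_{\gamma}}(gc_{0})$, so $g\omega$ agrees with $\phi_{T}(gc_{0})$ on every ordered $k$-coarsening of $\mathcal{W}$. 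Thus to each window $\mathcal{W}$ I attach a table $T_{\mathcal{W}}\in\T^{k}$, an element $g_{\mathcal{W}}\in G$, and the chain $g_{\mathcal{W}}c_{0}\in\Phi(X)$, so that $g_{\mathcal{W}}\omega$ and $\phi_{T_{\mathcal{W}}}(g_{\mathcal{W}}c_{0})$ coincide on the window.

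The proof closes with a compactness argument over the directed set $\D$ (ordered by refinement). Since $\T^{k}$ is finite, some single $T$ occurs for a cofinal family of windows $\mathcal{W}$; restricting to this family and passing to a subnet, compactness of $\Omega_{k}$ and of $\Phi(X)$ gives $g_{\mathcal{W}}\omega\to\omega^{*}\in M$ and $g_{\mathcal{W}}c_{0}\to c^{*}\in\Phi(X)$, with $\phi_{T}(g_{\mathcal{W}}c_{0})\to\phi_{T}(c^{*})$ by continuity of $\phi_{T}$. For each fixed $\beta\in\D_{k}$ the two nets agree for all $\mathcal{W}$ that refine $\beta$, which is a residual set of indices, so their limits coincide in the coordinate $\beta$; as $\beta$ was arbitrary this gives $\omega^{*}=\phi_{T}(c^{*})$, a point of $M\cap\phi_{T}(\Phi(X))$, whence $M=\phi_{T}(\Phi(X))$ and the theorem follows. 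I expect the genuine difficulty to lie in the middle two steps—faithfully matching the abstract partitions of $\tilde{\Pi}\binom{N}{k}$ with the amalgamated clopen covers and keeping the orderings and permutations straight as they pass through $\pi_{c_{0}}$, $\theta_{\beta}$, and the commuting $G$- and $S_{k}$-actions—rather than in the soft dynamical arguments at either end.
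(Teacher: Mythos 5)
Your proof is correct and follows essentially the same route as the paper's: fix $\omega\in M$ and a base chain $c_{0}$, use $\pi_{c_{0}}$ to view $\omega$ as a table-valued coloring of unordered $k$-partitions, apply the dual Ramsey theorem to homogenize each window, transport the homogeneous coarsening onto the window via partition homogeneity, and identify a limit point of the resulting net over $\D$ as a common point of $M$ and $\phi_{T}(\Phi(X))$, so that minimality forces equality. The only (harmless) difference is in the bookkeeping of orderings: the paper normalizes everything to be $c_{0}$-ordered (choosing the Ramsey partition $\beta$ with $t_{c_{0}}^{\ast}(\beta)=\beta$ and transporting onto $t_{c_{0}}^{\ast}(\alpha)$), so its limit is pinned down as exactly $\phi_{T}(c_{0})$, whereas you let the chain move ($g_{\mathcal{W}}c_{0}$) and invoke compactness of $\Phi(X)$ to land on $\phi_{T}(c^{\ast})$ for some limit chain $c^{\ast}$ --- both arguments are sound and reach the same conclusion.
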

\begin{proof}
Fix a minimal subset $\Sigma\subset\Omega_{k}$. We shall construct
a homomorphism $\phi:(G,\Phi(X))\rightarrow(G,\Sigma)$. Moreover
it will be shown that $\phi=\phi_{T}$ for some $T\in\T^{k}$. Fix
a point $\omega\in\Sigma$ and $c_{0}\in\Phi(X)$. We consider $\tilde{\omega}^{c_{0}}$
as a coloring of elements of $\tilde{\D_{k}}$ by $r=|\T^{k}|$ where
the colors are the tables of $\T^{k}$. For $\tilde{\beta}\in\D_{k}$,
we thus denote by $\tilde{\omega}^{c_{0}}(\tilde{\beta})$ the element
in $\T^{k}$. Let $m\in\mathbb{N}$ and fix ${\alpha}\in\D_{m}$.
Let ${\beta}\in\D$ such that ${\alpha}\preceq{\beta}$, $t_{c_{0}}^{\ast}({\beta})={\beta}$
and $|{\beta}|=N=DR(k,m,r)$ as in Theorem \ref{thm:Dual_Ramsey}.

We define the coloring map to be $f:\Pi{N \choose k}\rightarrow\T^{k}$
where $\gamma\hookrightarrow\tilde{\omega}^{c_{0}}(\tilde{t}({\beta}_{\gamma}))$.
According to Theorem \ref{thm:Dual_Ramsey} there exists $\eta\in\Pi{N \choose m}$
and $T_{{\alpha}}\in\T^{k}$ so that for any $\tau\in\Pi{m \choose k}$,
$f(\eta_{\tau})=T_{{\alpha}}$. Let $g_{{\alpha}}\in G$ be such that
$g_{{\alpha}}^{-1}t_{c_{0}}^{\ast}({\alpha})={\beta}_{\eta}$. Denote
$\tilde{\omega}_{g_{{\alpha}}}^{c_{0}}=g_{{\alpha}}\bullet_{c_{0}}\tilde{\omega}^{c_{0}}$.
Notice \begin{align*}
\tilde{\omega}_{g_{{\alpha}}}^{c_{0}}(\tilde{t}(t_{c_{0}}^{\ast}({\alpha})_{\tau}))(\sigma) & =\omega(\sigma^{-1}g_{{\alpha}}^{-1}(t_{c_{0}}^{\ast}({\alpha})_{\tau}))\\
 & =\omega(\sigma^{-1}(g_{{\alpha}}^{-1}t_{c_{0}}^{\ast}({\alpha}))_{\tau})\\
 & =\omega(\sigma^{-1}({\beta}_{\eta})_{\tau})=\omega(\sigma^{-1}{\beta}_{\eta_{\tau}})\end{align*}
 for any $\tau\in\Pi{m \choose k}$. We also have \[
T_{\alpha}=f(\eta_{\tau})=\tilde{\omega}^{c_{0}}(\tilde{t}({\beta}_{\eta_{\tau}}))(\sigma)=\omega(\sigma^{-1}t_{c_{0}}^{\ast}(\tilde{t}({\beta}_{\eta_{\tau}})))=\omega(\sigma^{-1}{\beta}_{\eta_{\tau}})\]
as $t_{c_{0}}^{\ast}({\beta})={\beta}$. Conclude $\tilde{\omega}_{g_{{\alpha}}}^{c_{0}}(\tilde{t}(t_{c_{0}}^{\ast}({\alpha})_{\tau}))=T_{{\alpha}}$.
Let $\tilde{\upsilon}\in\Sigma$ be an accumulation point of the net
$\{\tilde{\omega}_{g_{{\alpha}}}^{c_{0}}\}_{{\alpha}\in\D}$. Let
$\tilde{\xi}_{1},\tilde{\xi}_{2}\in\tilde{\D}_{k}$. Let ${\alpha}$
be a common ordered refinement. By the calculations we have just performed
for any ${\gamma}\succeq{\alpha}$, 
$\tilde{\xi}_{1}=\tilde{t}(t_{c_{0}}^{\ast}({\gamma})_{\tau_{1}})$
and $\tilde{\xi}_{2}=\tilde{t}(t_{c_{0}}^{\ast}({\gamma})_{\tau_{2}})$
for some $\tau_{1},\tau_{2}\in\Pi{|{\gamma}| \choose k}$), we have
$\tilde{\omega}_{g_{{\gamma}}}^{c_{0}}(\tilde{\xi}_{1})=\tilde{\omega}_{g_{{\gamma}}}^{c_{0}}(\tilde{\xi}_{2})$.
This implies there exists $T\in\T^{k}$ such that for any $\tilde{\xi}\in\tilde{\D}_{k}$,
$\tilde{\upsilon}(\tilde{\xi})=T$, i.e. $\tilde{\nu}=\tilde{\omega}_{T}$
defined above. We conclude $\Sigma=\phi_{T}(\Phi(X))$.
\end{proof}

\subsection{\label{sub:Universal-minimal-spaces} Calculation of the universal
minimal space}

We now proceed as in \cite{GW03}.
\begin{lem}
\label{lem:X zero implies Homeo zero}If $Y$ is zero-dimensional
compact Hausdorff topological space then the topological group ${\rm Homeo}(Y)$
equipped with the compact-open topology has a clopen basis at the
identity.\end{lem}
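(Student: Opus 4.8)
The plan is to exhibit an explicit clopen neighborhood basis at the identity built from finite clopen partitions of $Y$. Recall that the compact-open topology on $\mathrm{Homeo}(Y)$ is generated by the subbasic sets $[K,U]=\{g:\,g(K)\subseteq U\}$ with $K\subseteq Y$ compact and $U\subseteq Y$ open. For a finite clopen partition $\alpha=\{A_1,\dots,A_n\}$ of $Y$ set $V_\alpha=\{g\in\mathrm{Homeo}(Y):\,gA_i=A_i,\ i=1,\dots,n\}$, the obvious analogue of the subgroups $H_\alpha$. I would prove that the family $\{V_\alpha\}$, indexed by the set of finite clopen partitions directed under refinement, is a clopen neighborhood basis at the identity.

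First I would check that each $V_\alpha$ is clopen. Since $Y$ is compact each $A_i$ is compact as well as open, so $[A_i,A_i]=\{g:\,gA_i\subseteq A_i\}$ is subbasic open. Because the $A_i$ partition $Y$ and $g$ is a bijection, $gA_i\subseteq A_i$ for all $i$ forces $gA_i=A_i$ for all $i$; hence $V_\alpha=\bigcap_{i=1}^{n}[A_i,A_i]$ is open. For closedness it suffices to show each $[A_i,A_i]$ is closed. If $g\notin[A_i,A_i]$ there is a point $y\in A_i$ with $g(y)\in A_i^{c}$; as $A_i^{c}$ is clopen, $[\{y\},A_i^{c}]$ is an open neighborhood of $g$, and every member $h$ of it satisfies $h(y)\in A_i^{c}$ with $y\in A_i$, hence $h\notin[A_i,A_i]$. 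Thus $[A_i,A_i]$, and therefore $V_\alpha$, is clopen.

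It remains to show that the $V_\alpha$ form a basis at the identity, and this is the step where zero-dimensionality is essential. A basic neighborhood of $\mathrm{id}$ is a finite intersection $\bigcap_{l=1}^{m}[K_l,U_l]$ with $K_l\subseteq U_l$. For each $l$, compactness of $K_l$ together with zero-dimensionality lets me choose a clopen set $C_l$ with $K_l\subseteq C_l\subseteq U_l$ (cover $K_l$ by clopen sets contained in $U_l$ and extract a finite subcover). Let $\alpha$ be the finite clopen partition into the atoms of the Boolean algebra generated by $C_1,\dots,C_m$. Every $C_l$ is a union of blocks of $\alpha$, so any $g\in V_\alpha$ satisfies $gC_l=C_l$ and hence $g(K_l)\subseteq gC_l=C_l\subseteq U_l$; that is, $V_\alpha\subseteq\bigcap_{l=1}^{m}[K_l,U_l]$. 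Since $\mathrm{id}\in V_\alpha$, this proves the basis property and completes the argument.

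The only genuinely nontrivial point is the basis property, which rests on the clopen separation $K\subseteq C\subseteq U$ available precisely because $Y$ is zero-dimensional and compact; the clopenness of the $V_\alpha$ is then a formal consequence of the compact-open definition. Note also that each $V_\alpha$ is in fact a subgroup, so this simultaneously shows that $\mathrm{Homeo}(Y)$ admits a neighborhood basis of clopen subgroups at the identity.
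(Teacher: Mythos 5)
Your proof is correct and takes the same approach as the paper: your sets $V_\alpha$ are exactly the stabilizer subgroups $H_{\alpha}$ that the paper declares to be the clopen basis, with the verification deferred to Lemma 3.2 of the cited reference [MS01]. Your argument simply supplies in full the details (clopenness of each $[A_i,A_i]$ and the clopen interpolation $K_l\subseteq C_l\subseteq U_l$ via compactness and zero-dimensionality) that the paper outsources to that citation.
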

\begin{proof}
See the proof of Lemma 3.2 of \cite{MS01}. The clopen basis is given
by $\{H_{{\alpha}}\}_{{\alpha}\in\D}$ where $H_{{\alpha}}$ is defined
in Subsection \ref{sub:omega_k}. \end{proof}
\begin{thm}
\label{thm:U_G is ZD} Let $H$ be a topological group. If the topology
of $H$ admits a basis for neighborhoods at the identity consisting
of clopen subgroups, then $M({H})$ is zero dimensional. \end{thm}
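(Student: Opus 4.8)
The plan is to show that a group $H$ with a neighborhood basis at the identity consisting of clopen subgroups acts on a zero-dimensional space, and that the universal minimal space $M(H)$ inherits zero-dimensionality. The key structural fact I would exploit is that when $\{U_\lambda\}$ is a basis of clopen subgroups at the identity, each right coset space $U_\lambda \backslash H$ is a \emph{discrete} $H$-space, and the natural quotient maps organize $H$ and its actions into an inverse-limit-type structure over these discrete pieces. This mirrors exactly the setup of Subsection \ref{sub:omega_k}, where $H_\alpha \backslash G$ is identified with the discrete space $\D_k$ and used to build the zero-dimensional $G$-spaces $\Omega_k = \{1,-1\}^{\D_k}$.

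First I would recall the abstract characterization of $M(H)$ via its enveloping/greatest ambit structure, or more concretely use the fact that $M(H)$ is a minimal subsystem of a suitable universal object built from $H$. The cleanest route is to observe that since every clopen subgroup $U$ has open (hence clopen) cosets, the coset action of $H$ on the discrete set $U\backslash H$ is continuous, and so any product of such discrete actions, e.g. $\{1,-1\}^{U\backslash H}$ with the shift-type action as in Subsection \ref{sub:omega_k}, is a zero-dimensional $H$-space. Next I would assemble the greatest ambit $S(H)$ (equivalently the space of near-ultrafilters, or the Samuel/Roelcke-type compactification corresponding to the uniformity generated by the clopen subgroups) and argue that because the defining uniformity has a basis of \emph{clopen} (equivalence-relation) entourages coming from the subgroups $U_\lambda$, this compactification is itself zero-dimensional: it embeds into a product $\prod_\lambda \beta(U_\lambda \backslash H)$ of Stone--\v{C}ech compactifications of discrete sets, each of which is zero-dimensional, and a product of zero-dimensional compacta is zero-dimensional.

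The final step is to descend from the greatest ambit to $M(H)$. Since $M(H)$ is a (minimal) subspace of the greatest ambit $S(H)$, and a closed subspace of a zero-dimensional compact Hausdorff space is again zero-dimensional, the conclusion $M(H)$ is zero-dimensional follows immediately once $S(H)$ is shown to be zero-dimensional. I would phrase the descent using the universal property: every minimal $H$-system is a factor of $S(H)$, and $M(H)$ can be realized concretely as a minimal closed invariant subset, to which restriction of the zero-dimensional topology preserves the property.

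The main obstacle I anticipate is establishing rigorously that the greatest ambit (or whichever universal compactification one uses) is zero-dimensional. The subtlety is that the right uniformity on $H$ whose Samuel compactification gives the greatest ambit must be shown to admit a basis of clopen entourages precisely when the identity has a clopen-subgroup basis; concretely one must check that the partition of $H$ into right cosets $\{Ug : g\in H\}$ of a clopen subgroup $U$ is a uniformly clopen entourage and that such entourages generate the whole uniformity. Granting that, the embedding into a product of Stone spaces of discrete coset sets is routine, and zero-dimensionality transfers through products, closed subspaces, and the passage to $M(H)$ without further difficulty.
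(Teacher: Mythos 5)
Your proposal is correct and follows essentially the same route as the paper: the paper's entire proof is a citation of Proposition 3.4 of \cite{P98}, which asserts that under exactly these hypotheses the greatest ambit of $H$ is zero-dimensional, after which $M(H)$ inherits zero-dimensionality as a closed (minimal) invariant subspace of the greatest ambit. The only difference is that where the paper cites Pestov you prove the key fact directly---observing that every bounded right uniformly continuous function on $H$ is a uniform limit of functions factoring through the discrete coset spaces $U_{\lambda}\backslash H$, so that the greatest ambit embeds into the zero-dimensional compact space $\prod_{\lambda}\beta(U_{\lambda}\backslash H)$---and that argument is sound.
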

\begin{proof}
This follows from Proposition 3.4 of \cite{P98} where it is shown
that under the same conditions the \textbf{greatest ambit} of $H$
is zero-dimensional.
\end{proof}
We now give the proof of Theorem \ref{thm:U_G_for_Homogeneous}:
\begin{proof}
The proof is a reproduction of the proof appearing in \cite{GW03}
that $M(G)=\Phi(K)$, where $K$ is the Cantor set and $G={\rm Homeo}(K)$
is equipped with the compact-open topology. By Theorem \ref{thm:X_is_minimal&exteremly_proximal}
$(G,\Phi(X))$ is minimal and therefore there is an epimorphism $\pi:(G,M(G))\rightarrow(G,\Phi(X))$.
Fix $c_{0}\in\Phi(X)$ and let $m_{0}\in M(G)$ so that $\pi(m_{0})=c_{0}$.
By Lemma \ref{lem:X zero implies Homeo zero} and Theorem \ref{thm:U_G is ZD}
$M(G)$ is zero-dimensional. Let $D\subset M(G)$ be a clopen subset
and define the continuous function $F_{D}=2\ch_{D}-\ch$, where $\ch_{D}$
is the indicator function of $D$. If $H=\{g\in G:gD=D\}$ then $H$
is a clopen subgroup of $G$ and hence it contains $H_{{\alpha}}$
for some ${\alpha}\in\D_{k}$ for some $k\in\mathbb{N}$ (see
proof of Lemma \ref{lem:X zero implies Homeo zero}). It follows that
the map $\psi_{D}(m)=(F_{D}(gm))_{g\in G}$, $m\in M(G)$ can be defined
as a mapping into $\{1,-1\}^{H_{{\alpha}}\backslash G}=\Omega_{k}$
and thus we have $\psi_{D}:(G,M(G))\to(G,\Omega_{k})$, so that if
we set $Y_{D}=\psi_{D}(M(G))$, the system $(Y_{D},G)$ is a minimal
symbolic subsystem of $\Omega_{k}$. Denote $y{}_{D}=\psi_{D}(m_{0}).$

%
{}Apply Theorem \ref{thm:Minimal Factor} to define a $G$-homomorphism
$\phi_{D}:\Phi\to\Omega_{k}$, with and $y'_{D}=\phi_{D}(c_{0})$.
Given a clopen subset $D\subset M(G)$ consider the following diagram:
\[
\xymatrix{(M(G),m_{0})\ar[d]_{\psi_{D}}\ar[r]^{\pi} & (\Phi,c_{0})\ar[d]^{\phi_{D}}\\
(Y_{D},y_{D}) & (Y_{D},y'_{D})}
\]

The image $(\psi_{D}\times(\phi_{D}\circ\pi))(M(G),m_{0})=(W,(y_{D},y'_{D}))$,
with $W\subset Y_{D}\times Y_{D}$, is a minimal subset of the product
system $(Y_{D}\times Y_{D},G)$. By Theorem \ref{thm:X_is_minimal&exteremly_proximal}(\ref{enu:Phi_is_proximal})
$(Y_{D},G)$ is proximal. Therefore the diagonal $\Delta=\{(y,y):y\in Y_{D}\}$
is the unique minimal subset of the product system and we conclude
that $y_{D}=y'_{D}$, so that the above diagram is replaced by \[
\xymatrix{(M(G),m_{0})\ar[dr]_{\psi_{D}}\ar[rr]^{\pi} &  & (\Phi,c_{0})\ar[dl]^{\phi_{D}}\\
 & (Y_{D},y_{D})}
\]
 Next form the product space \[
\Pi=\prod\{Y_{D}:D\ \text{a clopen subset of}\ M(G)\},\]
 and let $\psi:M(G)\to\Pi$ be the map whose $D$-projection is $\psi_{D}$
(i.e. $(\psi(m))_{D}=\psi_{D}(m)$). We set $Y=\psi(M(G))$ and observe
that, since clearly the maps $\psi_{D}$ separate points on $M(G)$,
the map $\psi:M(G)\to Y$ is an isomorphism, with $\psi(m_{0})=y_{0}$,
where $y_{0}\in Y$ is defined by $(y_{0})_{D}=y_{D}$. Likewise define
$\phi:\Phi(X)\to Y$ by $(\phi(m))_{D}=\phi_{D}(m)$, so that also
$\phi(c_{0})=y_{0}$. These equations force the identity $\psi=\phi\circ\pi$
in the diagram \[
\xymatrix{(M(G),m_{0})\ar[dr]_{\psi}\ar[rr]^{\pi} &  & (\Phi,c_{0})\ar[dl]^{\phi}\\
 & (Y,y_{0})}
\]
 Since $\psi$ is a bijection it follows that so are $\pi$ and $\phi$
and the proof is complete.
\end{proof}
\bibliographystyle{alpha}
\bibliography{universal_bib}

\end{document}